\documentclass[11pt]{article}
\usepackage[utf8]{inputenc}
\usepackage[T1]{fontenc}
\usepackage{amsmath,amssymb,amsthm,amsfonts}
\usepackage{tikz}
\usepackage{graphicx}
\usepackage{subcaption}
\usepackage{mathrsfs}
\usepackage{enumitem}
\usepackage{geometry}
\usepackage{authblk}
\numberwithin{equation}{section}

\theoremstyle{plain}
\newtheorem{theorem}{Theorem}[section]

\newtheorem{lemma}[theorem]{Lemma}
\newtheorem{corollary}[theorem]{Corollary}
\newtheorem{proposition}[theorem]{Proposition}
\theoremstyle{definition}
\newtheorem{definition}[theorem]{Definition}
\newtheorem{remark}{Remark}

\numberwithin{equation}{section}

\newcommand{\T}{\mathscr T_K}
\newcommand{\M}{\mathrm M_\infty(K)}
\newcommand{\tr}{\operatorname{Tr}}
\newcommand{\Span}{\operatorname{span}_K}
\newcommand{\rad}{\operatorname{rad}}
\newcommand{\ch}{\operatorname{char}}

\title{Lie Ideals and Lie Derivations of the Algebraic Toeplitz Algebra}
\author[1,2]{Nguyen Huynh Thao Nhi%
\thanks{Email: \texttt{nhinht\_ph@utc.edu.vn}}}
\author[1]{Huynh Viet Khanh%
\thanks{Email: \texttt{khanhhv@hcmue.edu.vn}}}

\affil[1]{Department of Mathematics and Informatics,
Ho Chi Minh City University of Education,
Ho Chi Minh City, Vietnam}

\affil[2]{Campus in Ho Chi Minh City,
University of Transport and Communications,
Ho Chi Minh City, Vietnam}
\date{}

\begin{document}
\maketitle
\begin{abstract}
Let $K$ be a field and let $\T=L_K(E_T)$ be the algebraic Toeplitz algebra. We classify the Lie ideals of $\T^-$. If $w$ is the sink of the Toeplitz graph and $F=I(w)$, then $F\cong M_\infty(K)$, $\T/F\cong K[t,t^{-1}]$, and $[\T,\T]=F$. We prove that every noncentral Lie ideal contains $\mathfrak s=[F,F]$, the finitary trace-zero Lie algebra. Thus the classification reduces to the Heisenberg-type quotient $\T/\mathfrak s$. The Lie ideals are precisely $0$, $K1$, the spaces $\mathfrak s+U$, where $U\leq P_K\oplus Kw$ and $w\notin U$, and the inverse images $\rho^{-1}(W)$, where $W\leq K[t,t^{-1}]$. Here $P_K=K1$ in characteristic $0$, and $P_K=\operatorname{span}_K\{1,e^{m\ell},(e^*)^{m\ell}:m\geq1\}$ if $\operatorname{char}K=\ell>0$. We also describe the Lie derivations of $\T^-$ as sums of associative derivations of $\T$ and central Lie derivations factoring through $\T/F$.
\end{abstract}

\section{Introduction}

Let $K$ be a field. If $A$ is an associative $K$-algebra, then $A^-$ denotes the Lie algebra with the same underlying vector space and bracket $[a,b]=ab-ba$. Every two-sided ideal of $A$ is a Lie ideal of $A^-$, but the converse need not hold. For Leavitt path algebras this distinction is important: their two-sided ideals are well understood, while their Lie ideals are usually harder to describe.

Lie structures of Leavitt path algebras have been studied from several points of view. Mesyan described in \cite{Mesyan2013} the commutator subspace $[L_K(E),L_K(E)]$ and classified the Leavitt path algebras satisfying $L_K(E)=[L_K(E),L_K(E)]$; in this commutator case every Lie ideal is a two-sided ideal. Khanh classified the Leavitt path algebras for which every Lie ideal is a two-sided ideal \cite{Huynh2024}. The present paper treats the complementary case of the algebraic Toeplitz algebra: its Lie ideal lattice is much larger than its associative ideal lattice, but it is still explicit.

Let $\T=L_K(E_T)$ be the Leavitt path algebra of the Toeplitz graph $E_T$:
\begin{center}
		\begin{tikzpicture}
			\node at (0,0) (0) {$\bullet$};
			\node at (0,-0.3) {$v$};
			\node at (1.5,0) (2) {$\bullet$};
			\node at (1.5,-0.3) {$w$};
			
			\draw [->] (0) to [out=225,in=135,looseness=6] node[left] {$e$} (0);
			\draw [->] (0) to node[above] {$f$} (2);
		\end{tikzpicture}
	\end{center}
This is the algebraic Toeplitz algebra, and it is isomorphic to the Jacobson algebra $K\langle X,Y\mid XY=1\rangle$ \cite[Proposition~1.3.7]{AbramsAraSiles}. Gerritzen studied in \cite{Gerritzen2000} the Jacobson algebra in detail, including its right ideals, module-theoretic structure, associative derivations, and automorphisms. We use the finite-rank ideal and Laurent polynomial quotient from this picture, but work throughout in the path language of $E_T$. Thus the sink $w$ generates the finitary matrix ideal $F=I(w)\cong M_\infty(K)$, and $\T/F\cong K[t,t^{-1}]$.

The algebra $\T$ has the natural $\mathbb Z$-grading inherited from the Leavitt path algebra grading: vertices have degree $0$, real edges degree $1$, and ghost edges degree $-1$. For the Toeplitz graph, $\deg e=\deg f=1$, $\deg e^*=\deg f^*=-1$, and $\deg w=0$. Both $F$ and $\mathfrak s=[F,F]$ are graded Lie ideals. Once the full Lie ideal lattice is known, the graded Lie ideals are obtained by imposing the same grading condition on the corresponding subspaces of $K[t,t^{-1}]$ and $P_K\oplus Kw$. This provides a useful contrast with the graded associative ideal lattice, which consists only of $0$, $F$, and $\T$.

The main result is the classification of the Lie ideals of $\T^-$. Since
$\T/F\cong K[t,t^{-1}]$ is abelian, the inverse image of every
$K$-subspace of $K[t,t^{-1}]$ is a Lie ideal of $\T^-$ containing
$F$. Most of these inverse images are not associative ideals. A second
family consists of Lie ideals containing $\mathfrak s=[F,F]$ but not
$F$; its form depends on $\operatorname{char}K$. Every member of this
family meets $F$ in $\mathfrak s$ and, unless it equals $\mathfrak s$,
is incomparable with $F$.

We first show that $[\T,\T]=F$. Then a finitary matrix-unit extraction argument, in the spirit of Ho\l{}ubowski and \.{Z}urek \cite[Section~2]{HolubowskiZurek}, shows that every noncentral Lie ideal of $\T^-$ contains $\mathfrak s$. Hence the problem reduces to $\T/\mathfrak s$. This quotient is a Heisenberg-type central extension of the abelian vector space $K[t,t^{-1}]$. Its center determines the Lie ideals containing $\mathfrak s$ whose
images in $\T/\mathfrak s$ are central, and this gives the complete
classification.

We also record the Lie derivations of $\T^-$. The associative derivations of the Jacobson algebra were described by Gerritzen in \cite{Gerritzen2000}, and Lopatkin later described in \cite{Lopatkin2019} derivations and outer derivations of Leavitt path algebras, including the Lie algebra structure of the outer derivation algebra of the Toeplitz algebra. We recall only the form needed here. The additional point is that Lie derivations of $\T^-$ include central Lie derivations factoring through $\T/F$, because $[\T,\T]=F\neq\T$.

The paper is organized as follows. Section~2 fixes the notation for Leavitt path algebras and infinite matrices. Section~3 records the path basis and the Laurent quotient for the Toeplitz algebra. Section~4 proves the classification of Lie ideals and describes the graded sublattice. Section~5 records the corresponding Lie derivations.

\section{Preliminaries and notation}

Let $K$ be a field. We recall first the basic notation for Leavitt path algebras. A directed graph $E=(E^0,E^1,r,s)$ consists of a set $E^0$ of vertices, a set $E^1$ of edges, and maps $s,r:E^1\to E^0$, called the source and range maps. A vertex $v$ is a \textit{sink} if $s^{-1}(v)=\emptyset$, and a \textit{source} if $r^{-1}(v)=\emptyset$. A vertex $v$ is \textit{regular} if $0<|s^{-1}(v)|<\infty$, and it is an \textit{infinite emitter} if $|s^{-1}(v)|=\infty$. A path is either a vertex, viewed as a path of length $0$, or a finite sequence $p=e_1\cdots e_n$ of edges such that $r(e_i)=s(e_{i+1})$ for $1\leq i<n$. We write $s(p)=s(e_1)$, $r(p)=r(e_n)$, and $|p|=n$ for the source, the range, and the length of $p$ respectively.

\begin{definition}[Leavitt path algebras]
Let $E$ be an arbitrary directed graph and $K$ any field. We define a set $(E^1)^*$ consisting of symbols of the form $\{e^*\mid e\in E^1\}$. The \emph{Leavitt path algebra} of $E$ with coefficients in $K$, denoted $L_K(E)$, is the free associative $K$-algebra generated by the set $E^0\cup E^1\cup (E^1)^*$, subject to the following relations:
\begin{enumerate}
\item[\rm(V)] $vv'=\delta_{v,v'}v$ for all $v,v'\in E^0$;
\item[\rm(E1)] $s(e)e=er(e)=e$ for all $e\in E^1$;
\item[\rm(E2)] $r(e)e^*=e^*s(e)=e^*$ for all $e\in E^1$;
\item[\rm(CK1)] $e^*e'=\delta_{e,e'}r(e)$ for all $e,e'\in E^1$;
\item[\rm(CK2)] $v=\sum_{{e\in E^1\mid s(e)=v}}ee^*$ for every regular vertex $v\in E^0$.
\end{enumerate}
\end{definition}

We fix the following notation for infinite matrices. Let $\mathbb N=\{1,2,\ldots\}$, and let $V$ be a $K$-vector space with basis $\mathscr B=\{e_1,e_2,\ldots\}$. Here $\mathcal L(V)=\operatorname{End}_K(V)$ denotes the unital $K$-algebra of all $K$-linear endomorphisms of $V$. Once $\mathscr B$ is fixed, every $T\in\mathcal L(V)$ is represented by a unique column-finite matrix $(a_{ij})$, where $T(e_j)=\sum_i a_{ij}e_i$. Thus $\mathcal L(V)$ is identified with the unital ring $\mathrm M_{cf}(K)$ of column-finite $\mathbb N\times\mathbb N$ matrices over $K$. Let $\mathrm M_{rcf}(K)$ be the subring of $\mathrm M_{cf}(K)$ consisting of matrices whose rows and columns are both finite, and let $\mathrm M_\infty(K)$ be the ring of finitary matrices. Then $\mathrm M_\infty(K)$ is a two-sided ideal of $\mathrm M_{rcf}(K)$.

We denote by $E_{ij}$ the standard matrix unit whose $(i,j)$-entry is $1$ and whose remaining entries are $0$. Thus $E_{ij}e_j=e_i$, $E_{ij}e_k=0$ if $k\neq j$, and $E_{ij}E_{kl}=\delta_{jk}E_{il}$. The off-diagonal matrix units are the $E_{ij}$ with $i\neq j$; the diagonal matrix units are the idempotents $E_{ii}$; and a diagonal difference is an element $E_{ii}-E_{jj}$. For $A=(a_{ij})\in\mathrm M_\infty(K)$, put $\operatorname{tr}(A)=\sum_{i\geq 1}a_{ii}$; this sum is finite since $A$ has only finitely many nonzero entries. Hence
$$
\begin{aligned}
[\mathrm M_\infty(K),\mathrm M_\infty(K)]
&=\{A\in\mathrm M_\infty(K):\operatorname{Tr}(A)=0\}\\
&=\operatorname{span}_K\{E_{ij}:i\neq j\}
+\operatorname{span}_K\{E_{ii}-E_{jj}:i,j\in\mathbb N\}.
\end{aligned}
$$
In particular, $\mathrm M_\infty(K)=KE_{11}\oplus[\mathrm M_\infty(K),\mathrm M_\infty(K)]$. We reserve $E_{ij}$ for the standard matrix units in $\mathrm M_\infty(K)$; the symbols $F_{ij}$ will denote their concrete realization inside $\T$.

\section{The Toeplitz algebra}

Let $E_T$ be the Toeplitz graph and put $\T=L_K(E_T)$. Thus $1=v+w$, $e^*e=v$, $f^*f=w$, $e^*f=f^*e=0$, and $v=ee^*+ff^*$. We will work directly with paths in $E_T$. Let
$$
        \alpha_1=w\qquad\text{and}\qquad \alpha_i=e^{i-2}f\quad\text{where }i\geq2.
$$
Then $\{\alpha_i:i\geq1\}$ is precisely the set of paths in $E_T$ ending at the sink $w$. For $i,j\geq1$, define $F_{ij}=\alpha_i\alpha_j^*$.

\begin{lemma}\label{lem:matrix-units}
Let $I(w)$ be the ideal of $\T$ genenerated by $w$. For all $i,j,k,l\geq1$, $F_{ij}F_{kl}=\delta_{jk}F_{il}$. Consequently $F:=\bigoplus_{i,j\geq1}KF_{ij}$ is a two-sided ideal of $\T$, $F\cong\M$, and $F=I(w)$.
\end{lemma}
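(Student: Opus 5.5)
We first verify the multiplication rule. The key computation is $\alpha_j^*\alpha_k=\delta_{jk}w$ for all $j,k\geq1$. It is done case by case.

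If $j=k=1$, this is $w\cdot w=w$.

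If exactly one of $j,k$ equals $1$, the product vanishes: $w\,e^{k-2}f=0$ because $s(e^{k-2}f)=v$ (or $s(f)=v$), and symmetrically $f^*(e^*)^{j-2}w=0$.

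If $j,k\geq2$, we compute $f^*(e^*)^{j-2}e^{k-2}f$ using (CK1) repeatedly, namely $e^*e=v$. If $j=k$, this collapses to $f^*vf=f^*f=w$. If $j<k$, it leaves $f^*e^{k-j}f$, which is $0$ since $f^*e=0$. If $j>k$, it leaves $f^*(e^*)^{j-k}f$, which is $0$ since $e^*f=0$.

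Then $F_{ij}F_{kl}=\alpha_i(\alpha_j^*\alpha_k)\alpha_l^*=\delta_{jk}\alpha_i w\alpha_l^*=\delta_{jk}F_{il}$, using $\alpha_i w=\alpha_i$ and $w\alpha_l^*=\alpha_l^*$ by (E1), (E2).

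Next we show that the $F_{ij}$ are linearly independent and nonzero. Since $F_{ii}F_{ii}=F_{ii}$ and $F_{ii}F_{ij}F_{jj}=F_{ij}$, a relation $\sum c_{kl}F_{kl}=0$ multiplied by $F_{ii}$ on the left and by $F_{jj}$ on the right gives $c_{ij}F_{ij}=0$. So it suffices that $F_{ij}\neq0$. This follows from $\alpha_i^*F_{ij}\alpha_j=w$, together with the standard fact that vertices are nonzero in $L_K(E)$. Consequently $E_{ij}\mapsto F_{ij}$ defines an algebra isomorphism $\M\to F$, where $F=\bigoplus_{i,j\geq1}KF_{ij}$ is a subalgebra.

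To see that $F$ is a two-sided ideal, it suffices to check closure under left and right multiplication by the generators $v,w,e,f,e^*,f^*$. Applying the involution $*$, which sends $F_{ij}$ to $F_{ji}$, reduces the right-multiplication checks to the left ones. The left products are as follows.
\begin{itemize}
\item $w\alpha_i=\delta_{i1}\alpha_1$ and $v\alpha_i=(1-\delta_{i1})\alpha_i$.
\item $e\alpha_i=\alpha_{i+1}$ for $i\geq2$, and $e\alpha_1=ew=0$.
\item $f\alpha_1=fw=f=\alpha_2$, and $f\alpha_i=0$ for $i\geq2$ because $r(f)=w\neq v$.
\item $e^*\alpha_i=\alpha_{i-1}$ for $i\geq3$, $e^*\alpha_2=e^*f=0$, and $e^*w=0$.
\item $f^*\alpha_2=w=\alpha_1$, $f^*\alpha_i=f^*e^{i-2}f=0$ for $i\geq3$, and $f^*w=0$.
\end{itemize}
In every case $x\alpha_i\in\{0\}\cup\{\alpha_k\}$, so $xF_{ij}\in F$.

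Finally, $F=I(w)$. We have $w=F_{11}\in F$ and $F$ is an ideal, so $I(w)\subseteq F$. Conversely, each $F_{ij}=\alpha_i w\alpha_j^*\in I(w)$, so $F\subseteq I(w)$.

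The step most prone to error is the case analysis for $\alpha_j^*\alpha_k$, particularly when $j\neq k$. There everything hinges on the relations $e^*f=f^*e=0$ and on the left-over powers of $e$ or $e^*$ being absorbed correctly. The nonvanishing of $w$ is the only external input.
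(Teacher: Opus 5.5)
Your proof is correct, but it is more hands-on than the paper's. The paper argues structurally. Since $w$ is a sink, the paths $\alpha_i$ are pairwise incomparable, so the general path calculus in a Leavitt path algebra gives $\alpha_j^*\alpha_k=\delta_{jk}w$ at once. It then quotes the standard description of the ideal generated by a hereditary set of vertices: $I(w)$ is spanned by the elements $\alpha\beta^*$ with $r(\alpha)=r(\beta)=w$, which are exactly the $F_{ij}$. This gives in one step both that $F$ is a two-sided ideal and that $F=I(w)$.

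You instead derive $\alpha_j^*\alpha_k=\delta_{jk}w$ by a case analysis straight from (E1), (E2) and (CK1). You check the ideal property by computing $x\alpha_i$ for each generator $x$, using the involution to reduce right multiplication to left multiplication. You then get $F=I(w)$ from the two obvious inclusions. In effect, your generator-by-generator check is the content hidden inside the paper's citation of the span description of $I(w)$.

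The paper's version is shorter and applies verbatim to any sink in any graph. Yours is self-contained, needing only the standard fact that $w\neq0$. It also proves something the paper leaves implicit: the $F_{ij}$ are linearly independent, via $\alpha_i^*F_{ij}\alpha_j=w\neq0$ and compression by $F_{ii}$ and $F_{jj}$. That independence is what justifies the direct-sum notation $F=\bigoplus KF_{ij}$ and the isomorphism $F\cong\M$.
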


\begin{proof}
Since $w$ is a sink, the paths $\alpha_i$ are mutually incomparable. Hence $\alpha_j^*\alpha_k=0$ if $j\neq k$, while $\alpha_j^*\alpha_j=w$. Therefore $F_{ij}F_{kl}=\alpha_i(\alpha_j^*\alpha_k)\alpha_l^*=\delta_{jk}\alpha_i\alpha_l^*=\delta_{jk}F_{il}$. Thus the $F_{ij}$'s are matrix units. The ideal $I(w)$ is spanned by all elements $\alpha\beta^*$ with $r(\alpha)=r(\beta)=w$, and these paths are precisely the $\alpha_i$'s. Hence $F=I(w)$.
\end{proof}

We shall use the standard path basis of a Leavitt path algebra:

\begin{lemma}\label{lem:path-basis}
The set
$$
        \mathcal B=\{v\}\cup\{e^n:n\geq1\}\cup\{(e^*)^n:n\geq1\}\cup\{F_{ij}:i,j\geq1\}
$$
is a $K$-basis of $\T$. Consequently, as a vector space, we have
$$
        \T=K1\oplus\Bigl(\bigoplus_{n\geq1}Ke^n\Bigr)\oplus\Bigl(\bigoplus_{n\geq1}K(e^*)^n\Bigr)\oplus F.
$$
\end{lemma}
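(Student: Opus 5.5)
The plan is to start from the standard basis theorem for Leavitt path algebras: for any graph $E$, the set of nonzero monomials $\alpha\beta^*$ with $r(\alpha)=r(\beta)$, after removing one specific element for each regular vertex, is a $K$-basis of $L_K(E)$. Concretely, the removed elements are the products $\alpha e_v e_v^*\beta^*$, where $e_v$ is a fixed "special" edge emitted by $v$. For $E_T$, the only regular vertex is $v$, which emits $e$ and $f$. I will take $f$ as the special edge, so that the monomials $\alpha ff^*\beta^*$ are the ones discarded.

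The first step is to list all monomials $\alpha\beta^*$ with $r(\alpha)=r(\beta)$. The paths of $E_T$ are $w$, the powers $e^n$ with $n\ge0$ (where $e^0=v$), and the paths $e^nf$. Paths ending at $w$ are exactly the $\alpha_i$, so monomials with common range $w$ are precisely the $F_{ij}$. Monomials with common range $v$ are $e^m(e^*)^n$ for $m,n\ge0$.

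The next step is to apply the reduction. The relation (CK2) rewritten as $e^m(e^*)^n$-type reductions does not remove the $e^m(e^*)^n$ themselves. The discarded elements are $e^mff^*(e^*)^n=F_{m+2,n+2}$, which lie in range $w$. This is a problem: with $f$ special, we would lose some $F_{ij}$, and the resulting basis would be of the form $e^m(e^*)^n$ plus some $F_{ij}$. That is not the basis in the statement.

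To fix this, I will choose $e$ as the special edge instead. Then the discarded elements are $e^{m+1}(e^*)^{n+1}$ for $m,n\ge0$, and the basis becomes
\[
\{F_{ij}\}\cup\{e^m(e^*)^n : \min(m,n)=0\}.
\]
This set consists of $v$, the $e^n$, the $(e^*)^n$, and the $F_{ij}$ for $n\ge1$, which is exactly $\mathcal B$. The required reduction is $e^{m+1}(e^*)^{n+1}=e^m(v-ff^*)(e^*)^n=e^m(e^*)^n-F_{m+2,n+2}$, and by induction on $\min(m,n)$ every $e^m(e^*)^n$ lies in the span of $\mathcal B$. This confirms that $\mathcal B$ spans, and linear independence follows from the cited basis theorem (Abrams–Ara–Siles Moreno, Cor. 1.5.12).

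Finally, the direct sum decomposition follows by replacing $v$ with $1=v+w=v+F_{11}$. This is a unitriangular change of basis modulo $F$, so $\{1\}\cup\{e^n\}\cup\{(e^*)^n\}\cup\{F_{ij}\}$ is again a basis. Grouping these basis elements gives the stated decomposition, with $F=\bigoplus KF_{ij}$ by Lemma~\ref{lem:matrix-units}.

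The main obstacle is choosing the special edge correctly so that the known basis theorem yields exactly $\mathcal B$. If I preferred not to cite the theorem, independence could instead be checked through the faithful representation of $\T$ on $K^{(\mathbb N)}$, where $e$ acts as the shift and $f$ acts on $w$-paths. Independence then reduces to distinguishing the images of $e^n$ and $(e^*)^n$ modulo finite-rank operators, using the grading together with the fact that these operators have infinite rank.
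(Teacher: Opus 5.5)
Your final argument is correct and matches the paper's proof: apply the basis theorem of Abrams--Ara--Siles Moreno (Corollary~1.5.12) with $e$ as the distinguished edge at $v$ (rightly abandoning your first choice of $f$). This excludes exactly the monomials $e^{m}(e^*)^{n}$ with $m,n\geq1$ and keeps every $F_{ij}$, and passing from $v$ to $1=v+F_{11}$ then gives the direct sum. Your explicit reduction $e^{m+1}(e^*)^{n+1}=e^m(e^*)^n-F_{m+2,n+2}$ is a harmless extra check of spanning, not needed once the theorem is cited.
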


\begin{proof}
This is the standard path basis theorem for Leavitt path algebras given in \cite[Corollary~1.5.12]{AbramsAraSiles}, applied with $e$ as the distinguished edge at $v$. Indeed, the paths ending at $v$ are $v,e,e^2,\ldots$, and the corollary excludes the monomials $e^m(e^*)^n$ with both $m,n\geq1$. The paths ending at $w$ are the $\alpha_i$'s, and they give exactly the matrix units $F_{ij}=\alpha_i\alpha_j^*$. Since $1=v+w$ and $w=F_{11}\in F$, the stated direct sum decomposition follows.
\end{proof}

The following lemma is well-known; so we ommit the proof:

\begin{lemma}\label{lem:quotient}
Let $\rho:\T\to K[t,t^{-1}]$ be the homomorphism determined by $\rho(v)=1$, $\rho(e)=t$, $\rho(e^*)=t^{-1}$, and $\rho(w)=\rho(f)=\rho(f^*)=0$. Then $\ker\rho=F$, and hence $\T/F\cong K[t,t^{-1}]$.
\end{lemma}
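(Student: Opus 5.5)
We need to prove Lemma \ref{lem:quotient}: $\rho$ is a well-defined homomorphism, $\ker\rho=F$, and $\T/F\cong K[t,t^{-1}]$.

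We first check that $\rho$ is well defined. By the universal property of $L_K(E_T)$, it suffices to verify that the proposed images satisfy the relations (V), (E1), (E2), (CK1) and (CK2) in $K[t,t^{-1}]$. The ghost $f^*$ is sent to $0$ as the statement intends.

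\begin{itemize}
\item (V): $\rho(v)^2=1=\rho(v)$, $\rho(w)^2=0=\rho(w)$, and $\rho(v)\rho(w)=0$.
\item (E1), (E2): $\rho(v)\rho(e)=t=\rho(e)\rho(v)$, and similarly for $e^*$. The relations involving $f$ or $f^*$ hold trivially because both sides are $0$; for example $\rho(v)\rho(f)=0=\rho(f)\rho(w)$.
\item (CK1): $\rho(e^*)\rho(e)=1=\rho(v)$ and $\rho(f^*)\rho(f)=0=\rho(w)$. The mixed products $e^*f$ and $f^*e$ map to $0$.
\item (CK2) at $v$, the only regular vertex: $\rho(e)\rho(e^*)+\rho(f)\rho(f^*)=1=\rho(v)$.
\end{itemize}

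Hence $\rho$ extends to a $K$-algebra homomorphism. It is surjective, since $1=\rho(v+w)$, $t=\rho(e)$ and $t^{-1}=\rho(e^*)$ generate $K[t,t^{-1}]$ as an algebra.

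Next we show $F\subseteq\ker\rho$. Since $\rho(w)=0$, the two-sided ideal $I(w)$ lies in the kernel, and $I(w)=F$ by Lemma \ref{lem:matrix-units}. Alternatively, each $F_{ij}=\alpha_i\alpha_j^*$ with $i$ or $j\ge 2$ contains a factor $f$ or $f^*$, and $F_{11}=w$, so every $F_{ij}$ maps to $0$.

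For the reverse inclusion we use the decomposition of Lemma \ref{lem:path-basis}. Write $x\in\ker\rho$ as
$$x=a_0v+\sum_{n\ge1}a_ne^n+\sum_{n\ge1}b_n(e^*)^n+y,\qquad y\in F.$$
Then
$$0=\rho(x)=a_0+\sum_{n\geq 1} a_nt^n+\sum_{n\geq 1} b_nt^{-n}.$$
The monomials $t^k$, $k\in\mathbb Z$, are linearly independent, so all the coefficients $a_0$, $a_n$, $b_n$ vanish. Therefore $x=y\in F$. Note that the basis element here is $v$ rather than $1$, which is harmless because $\rho(v)=1$.

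Thus $\ker\rho=F$, and the first isomorphism theorem gives $\T/F\cong K[t,t^{-1}]$. There is no real obstacle in this argument. The only point requiring care is the bookkeeping of the Cuntz–Krieger relation at $v$ and the use of the basis to get the reverse inclusion.
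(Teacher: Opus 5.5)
Your proof is correct. The relation check, with (CK2) needed only at the regular vertex $v$, is complete. The inclusion $F=I(w)\subseteq\ker\rho$ follows from Lemma~\ref{lem:matrix-units}. The reverse inclusion follows from the decomposition in Lemma~\ref{lem:path-basis} together with linear independence of the monomials $t^k$. Surjectivity and the first isomorphism theorem then give $\T/F\cong K[t,t^{-1}]$.

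The paper omits this proof as well known, but your argument is the standard one. The paper uses the same basis-plus-$\rho$ reasoning in the directness check preceding Lemma~\ref{lem:bracket-quotient}. Note also that your reverse inclusion uses only the spanning half of Lemma~\ref{lem:path-basis}.
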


\section{The Lie ideals}
Let $\mathfrak s=[F,F]$. Under the identification $F\cong\mathrm M_\infty(K)$, the elements of $\mathfrak s$ are precisely those matrices having trace zero; thus $\mathfrak s=\{a\in F\mid \operatorname{tr}(a)=0\}$.

\begin{lemma}\label{lem:s-trace-zero}
One has $F=KF_{11}\oplus\mathfrak s$ as vector spaces.
\end{lemma}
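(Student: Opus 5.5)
The plan is to transport the decomposition $\mathrm M_\infty(K)=KE_{11}\oplus[\mathrm M_\infty(K),\mathrm M_\infty(K)]$ from Section~2 along the isomorphism of Lemma~\ref{lem:matrix-units}. By that lemma, the $F_{ij}$ form a basis of $F$ and satisfy $F_{ij}F_{kl}=\delta_{jk}F_{il}$. Hence the linear map $\phi:F\to\M$ with $F_{ij}\mapsto E_{ij}$ is an algebra isomorphism. In particular $\phi$ preserves commutators, so
\[
\phi(\mathfrak s)=\phi([F,F])=[\M,\M],\qquad \phi(KF_{11})=KE_{11}.
\]
Applying $\phi^{-1}$ to the decomposition of $\M$ then gives $F=KF_{11}\oplus\mathfrak s$.

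For completeness, I will also check the matrix decomposition itself directly inside $F$, using the trace functional $\operatorname{tr}\bigl(\sum a_{ij}F_{ij}\bigr)=\sum_i a_{ii}$. This sum is finite because elements of $F$ are finite combinations of the $F_{ij}$.

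The check has three parts.

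\begin{enumerate}
\item \textbf{$\mathfrak s$ lies in the kernel of the trace.} It suffices to evaluate on basis commutators. We have $\operatorname{tr}[F_{ij},F_{kl}]=\operatorname{tr}(\delta_{jk}F_{il}-\delta_{li}F_{kj})=\delta_{jk}\delta_{il}-\delta_{li}\delta_{kj}=0$.

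\item \textbf{The kernel of the trace lies in $\mathfrak s$.} For $i\neq j$ we have $F_{ij}=[F_{ij},F_{jj}]\in\mathfrak s$. We also have $F_{ii}-F_{jj}=[F_{ij},F_{ji}]\in\mathfrak s$. Now take $a=\sum a_{ij}F_{ij}$ with $\sum_i a_{ii}=0$. Then
\[
a=\sum_{i\neq j}a_{ij}F_{ij}+\sum_i a_{ii}(F_{ii}-F_{11}),
\]
where the second sum is rewritten using $\sum_i a_{ii}=0$. Each term lies in $\mathfrak s$, so $a\in\mathfrak s$. Together with step 1, this shows $\mathfrak s$ is exactly the trace-zero part of $F$.

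\item \textbf{The direct sum.} Since $\operatorname{tr}(F_{11})=1\neq0$, we get $KF_{11}\cap\mathfrak s=0$. For any $a\in F$, the element $a-\operatorname{tr}(a)F_{11}$ has trace zero, so it lies in $\mathfrak s$. Hence $F=KF_{11}+\mathfrak s$, and the sum is direct.
\end{enumerate}

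The one point needing care is in step 2: the identity $F_{ij}=[F_{ij},F_{jj}]$ requires $i\neq j$, since then $F_{jj}F_{ij}=0$. Everything else is a formal consequence of the matrix-unit relations, and the argument holds in every characteristic.
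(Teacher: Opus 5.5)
Your proposal is correct and follows essentially the same route as the paper. Both arguments show that commutators are traceless, that the off-diagonal units $F_{ij}=[F_{ij},F_{jj}]$ (the paper uses $[F_{ii},F_{ij}]$) and the diagonal differences $F_{ii}-F_{jj}=[F_{ij},F_{ji}]$ lie in $\mathfrak s$ and span the trace-zero part, and then split off $\operatorname{tr}(a)F_{11}$; your explicit check that $\operatorname{tr}[F_{ij},F_{kl}]=0$ just fills in a step the paper asserts without computation.
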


\begin{proof}
Every commutator in the finitary matrix algebra has trace zero. Conversely, for $i\neq j$, $F_{ij}=[F_{ii},F_{ij}]$, and $F_{ii}-F_{jj}=[F_{ij},F_{ji}]$. Hence $[F,F]$ contains all off-diagonal matrix units and all diagonal differences, which span exactly $\mathfrak s$. Finally, if $A\in F$ and $\lambda=\tr(A)$, then $A-\lambda F_{11}\in\mathfrak s$, while $KF_{11}\cap\mathfrak s=0$ because $\tr(F_{11})=1$.
\end{proof}

\begin{proposition}\label{prop:derived}
One has $[\T,\T]=F$.
\end{proposition}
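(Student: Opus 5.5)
Both inclusions will be proved separately.

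For $[\T,\T]\subseteq F$: by Lemma~\ref{lem:quotient}, $\rho:\T\to K[t,t^{-1}]$ is an algebra homomorphism onto a commutative algebra. Hence $\rho([a,b])=[\rho(a),\rho(b)]=0$ for all $a,b\in\T$. So every commutator lies in $\ker\rho=F$, and since $F$ is a subspace, $[\T,\T]\subseteq F$.

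For $F\subseteq[\T,\T]$: Lemma~\ref{lem:s-trace-zero} gives $F=KF_{11}\oplus\mathfrak s$, and $\mathfrak s=[F,F]\subseteq[\T,\T]$ by definition. It therefore suffices to exhibit a single commutator in $\T$ equal to $F_{11}=w$, or at least differing from it by an element of $\mathfrak s$. This is the only real step, and it must use elements outside $F$, since commutators inside $F$ have trace zero. The natural choice is the pair $e+f$ and $e^*+f^*$, which realize $X,Y$ with $XY\neq YX$. Using $e^*e=v$, $f^*f=w$, $e^*f=f^*e=0$ and $v=ee^*+ff^*$, we get
$$
(e^*+f^*)(e+f)=v+w=1,\qquad (e+f)(e^*+f^*)=ee^*+ef^*+fe^*+ff^*=v+ef^*+fe^*.
$$
This route is messier, because $ef^*$ and $fe^*$ are not in $F$ in an obvious way. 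So I will instead use the cleaner pair $e$ and $e^*$. We have $[e^*,e]=e^*e-ee^*=v-ee^*=ff^*$ by (CK2). Since $\alpha_2=f$, this is $ff^*=F_{22}$, which lies in $F$ and has trace $1$.

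Hence $F_{22}\in[\T,\T]$. Writing $F_{11}=F_{22}+(F_{11}-F_{22})$ with $F_{11}-F_{22}\in\mathfrak s$ gives $F_{11}\in[\T,\T]$. Therefore $F=KF_{11}+\mathfrak s\subseteq[\T,\T]$, and equality follows.

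I expect no serious obstacle. The only point needing care is choosing a commutator with nonzero trace: it has to come from elements outside $F$, and the relation $v=ee^*+ff^*$ supplies exactly one such commutator.
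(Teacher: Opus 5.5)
Your proof is correct and is essentially the paper's argument: $[\T,\T]\subseteq F$ comes from the commutative quotient, and the reverse inclusion comes from $[e^*,e]=ff^*=F_{22}$ together with $F=KF_{11}\oplus\mathfrak s$ (the paper writes $F_{11}-F_{22}$ as the commutator $[f^*,f]$, while you take it from $\mathfrak s$). One remark on your abandoned detour: $ef^*=ev\,wf^*=0$ and $fe^*=fw\,ve^*=0$ because $r(e)=v\neq w=r(f)$, so $(e+f)(e^*+f^*)=v$ and $[e^*+f^*,e+f]=1-v=w$, which already exhibits $F_{11}$ as a single commutator.
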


\begin{proof}
Since $\T/F$ is commutative, $[\T,\T]\subseteq F$. For the reverse inclusion, write
$$
[e^*,e]=e^* e-e e^*=v-e e^*=f f^*=F_{22},
\qquad
[f^*,f]=f^* f-f f^*=w-f f^*=F_{11}-F_{22}.
$$
Hence $F_{11}=w=[e^*,e]+[f^*,f]\in[\T,\T]$. Also $\mathfrak s=[F,F]\subseteq[\T,\T]$. Since $F=KF_{11}\oplus\mathfrak s$, it follows that $F\subseteq[\T,\T]$. Thus $[\T,\T]=F$.
\end{proof}

\begin{lemma}\label{lem:s-ideal}
The subspace $\mathfrak s$ is a Lie ideal of $\T^-$.
\end{lemma}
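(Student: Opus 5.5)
We need to show $[x,a]\in\mathfrak s$ for every $x\in\T$ and $a\in\mathfrak s$. Since $F$ is a two-sided ideal of $\T$ by Lemma~\ref{lem:matrix-units}, $[x,a]=xa-ax$ lies in $F$. By Lemma~\ref{lem:s-trace-zero}, $\mathfrak s$ is exactly the set of trace-zero elements of $F$. So it suffices to show that $\tr(xa)=\tr(ax)$ for every $x\in\T$ and every $a\in F$. We will prove this for all $a\in F$, not just those in $\mathfrak s$.

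By bilinearity we may take $a=F_{kl}$ and let $x$ run over the basis $\mathcal B$ of Lemma~\ref{lem:path-basis}. Rather than check each basis element separately, we use a uniform description of the left and right actions of $\T$ on matrix units. For each $x\in\T$ and each $k$, the product $x\alpha_k$ lies in $\T w$, and $\T w=\bigoplus_i K\alpha_i$ because $F_{ij}w=\delta_{j1}\alpha_i$ and the other basis elements annihilate $w$ from the right. Define scalars $c_{ik}(x)$ by $x\alpha_k=\sum_i c_{ik}(x)\alpha_i$. Similarly, write $\alpha_l^*x=\sum_j d_{lj}(x)\alpha_j^*$.

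With this notation, $xF_{kl}=\sum_i c_{ik}(x)F_{il}$, so $\tr(xF_{kl})=c_{lk}(x)$. Likewise $F_{kl}x=\sum_j d_{lj}(x)F_{kj}$, so $\tr(F_{kl}x)=d_{lk}(x)$. The task therefore reduces to proving $c_{lk}(x)=d_{lk}(x)$, that is, showing that both coefficients equal $\alpha_l^*x\alpha_k\in Kw$. We read this off from the two expansions:
\[
\alpha_l^*x\alpha_k=\sum_i c_{ik}(x)\alpha_l^*\alpha_i=c_{lk}(x)w,
\qquad
\alpha_l^*x\alpha_k=\sum_j d_{lj}(x)\alpha_j^*\alpha_k=d_{lk}(x)w.
\]
Both use the orthogonality $\alpha_j^*\alpha_k=\delta_{jk}w$ established in the proof of Lemma~\ref{lem:matrix-units}. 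Hence $\tr([x,F_{kl}])=0$, and so $[\T,F]\subseteq\mathfrak s$, which in particular gives $[\T,\mathfrak s]\subseteq\mathfrak s$.

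The one point needing care is the claim that $x\alpha_k$ lies in the span of the $\alpha_i$, and dually that $\alpha_l^*x$ lies in the span of the $\alpha_j^*$. This is where the sink structure enters: $\T w$ is spanned by the elements $\alpha\beta^*w$, and since $w$ is a sink, $\beta^*w\ne0$ forces $\beta=w$. The dual statement follows by applying the involution $*$. Once this is in place, the trace identity is just the formal computation above, so I expect no further obstacle.
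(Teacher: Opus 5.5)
Your proof is correct, but it takes a different route from the paper. The paper's argument is purely formal. Since $F$ is a two-sided ideal, $[\T,F]\subseteq F$, and the Jacobi identity then gives $[\T,[F,F]]\subseteq[[\T,F],F]+[F,[\T,F]]\subseteq[F,F]=\mathfrak s$. This uses nothing about $F$ beyond its being an ideal; it shows that $[I,I]$ is a Lie ideal for any ideal $I$ of any associative algebra.

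You instead prove the stronger inclusion $[\T,F]\subseteq\mathfrak s$, and hence $[\T,F]=\mathfrak s$. You do this by showing the trace on $F$ is $\T$-invariant, $\tr(xa)=\tr(ax)$, identifying both $\tr(xF_{kl})$ and $\tr(F_{kl}x)$ with the scalar $\alpha_l^*x\alpha_k\in Kw$. The details check out:
\begin{itemize}
\item $\T w=\bigoplus_i K\alpha_i$, because $F_{ij}w=\delta_{j1}\alpha_i$ while $v$, $e^n$, $(e^*)^n$ annihilate $w$ on the right;
\item the involution gives the dual statement for $w\T$;
\item $w\neq0$ lets you cancel.
\end{itemize}

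In effect you are computing the matrix entries of $x$ in the representation of $\T$ on $\T w$. The paper introduces this representation only in Section~5, as the map $\Phi$ into $\mathrm M_{rcf}(K)$, where $\tr(xE_{kl})=x_{lk}=\tr(E_{kl}x)$ is immediate.

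Your route costs more computation and depends on the path basis and on the trace description of $\mathfrak s$. In return it yields a sharper statement. For example, it makes the centrality of $c=\pi(w)$ in $\T/\mathfrak s$ (part of Lemma~\ref{lem:bracket-quotient}) automatic, since $[w,\T]\subseteq[F,\T]\subseteq\mathfrak s$.
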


\begin{proof}
Since $F\triangleleft\T$, we have $[\T,F]\subseteq F$. Hence, by the Jacobi identity,
$$
[\T,\mathfrak s]=[\T,[F,F]]
\subseteq [[\T,F],F]+[F,[\T,F]]
\subseteq [F,F]=\mathfrak s.
$$
Thus $\mathfrak s\triangleleft\T^-$.
\end{proof}

The above proof uses a finitary analogue method of Ho\l{}ubowski and \u{Z}urek given in \cite[Section~2]{HolubowskiZurek}.

\begin{lemma}\label{lem:F-lie-ideals}
If $J$ is a nonzero Lie ideal of $F^-$, then $\mathfrak s\subseteq J$. Hence the Lie ideals of $F^-$ are precisely $0$, $\mathfrak s$, and $F$.
\end{lemma}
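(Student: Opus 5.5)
Let $J$ be a nonzero Lie ideal of $F^-$ and fix $0\neq a=\sum_{i,j}a_{ij}F_{ij}\in J$, a finite sum. The plan is to bracket $a$ with matrix units so as to isolate one off-diagonal matrix unit inside $J$, and then to generate all of $\mathfrak s$ from that unit.

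\textbf{Case 1: $a$ has a nonzero off-diagonal entry $a_{pq}$, $p\neq q$.} Because $a$ is finitary, I can pick indices $r\neq s$ that lie outside the finite support of $a$ and are distinct from $p,q$. Then
$$
[F_{rp},a]=F_{rp}a-aF_{rp}=\sum_j a_{pj}F_{rj},
$$
since $aF_{rp}=0$ (column $r$ of $a$ is zero). Next,
$$
[\,\textstyle\sum_j a_{pj}F_{rj},\,F_{qs}\,]=a_{pq}F_{rs}-0,
$$
because $F_{qs}F_{rj}=0$ for $s\neq r$. Hence $F_{rs}\in J$ with $r\neq s$.

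\textbf{Case 2: $a$ is diagonal but not scalar on its support.} Say $a_{pp}\neq0$, and choose $r$ outside the support. Then $[F_{pr},a]=-a_{pp}F_{pr}$ up to sign, which is an off-diagonal unit in $J$. The same trick works whenever some diagonal entry is nonzero, since $a_{rr}=0$. So every nonzero $a$ produces some $F_{rs}\in J$ with $r\neq s$.

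\textbf{Generating $\mathfrak s$.} Starting from one $F_{rs}$, I use the following identities:
\begin{itemize}
\item $[F_{ir},F_{rs}]=F_{is}$ for $i\neq s$;
\item $[F_{rs},F_{sj}]=F_{rj}$ for $j\neq r$;
\item combining these two, $F_{ij}\in J$ for all $i\neq j$ (passing through an auxiliary index when needed);
\item $[F_{ij},F_{ji}]=F_{ii}-F_{jj}$.
\end{itemize}
This gives all off-diagonal units and all diagonal differences. By Lemma~\ref{lem:s-trace-zero} these span $\mathfrak s$, so $\mathfrak s\subseteq J$.

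\textbf{Consequence.} Since $F=KF_{11}\oplus\mathfrak s$, any Lie ideal $J$ satisfies $J=\mathfrak s\oplus(J\cap\cdots)$. More precisely, $J/\mathfrak s$ is a subspace of the one-dimensional space $F/\mathfrak s$, so $J\in\{\mathfrak s,F\}$ when $J\neq0$. Both $\mathfrak s$ and $F$ are indeed Lie ideals: $\mathfrak s$ because $[F,\mathfrak s]\subseteq[F,F]$, and $F$ trivially.

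The only delicate point is the index bookkeeping in the extraction step. The proof relies on the infinitude of indices to choose $r,s$ outside the support; this is what rules out exceptional characteristic phenomena such as $\mathfrak{sl}_n$ in characteristic dividing $n$, where scalars lie in $\mathfrak s$.
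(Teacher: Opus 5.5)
Your proposal is correct and follows essentially the same route as the paper: you extract an off-diagonal unit $F_{rs}$ with $r,s$ outside the support via $[F_{rp},a]$ followed by a bracket with $F_{qs}$, generate all off-diagonal units and diagonal differences, and finish with $\dim F/\mathfrak s=1$. Your Case~2 is harmless but unnecessary, since the Case~1 computation never uses $p\neq q$; the paper uses this to treat every nonzero entry uniformly.
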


\begin{proof}
Choose $0\neq A=\sum_{(i,j)\in\Omega}a_{ij}F_{ij}\in J$, with $\Omega$ finite. Choose distinct $r,s\in\mathbb N$ which occur neither as a first coordinate nor as a second coordinate of any pair in $\Omega$. Pick $(i,j)\in\Omega$ with $a_{ij}\neq0$. Then
$$
[F_{ri},A]=F_{ri}A=\sum_k a_{ik}F_{rk}\in J.
$$
Hence
$$
\big[\sum_k a_{ik}F_{rk},F_{js}\big]=a_{ij}F_{rs}\in J,
$$
so $F_{rs}\in J$. Since $J$ is a Lie ideal of $F^-$,
$$
[F_{ar},F_{rs}]=F_{as}\quad\text{if }a\neq s,
\quad
\text{and}
\quad
[F_{rs},F_{sb}]=F_{rb}\quad\text{if }b\neq r.
$$
Thus $F_{as}\in J$ for all $a\neq s$. If $a\neq b$ and $a\neq s$, then $[F_{as},F_{sb}]=F_{ab}$, so $F_{ab}\in J$. If $b\neq s$, choose $c\notin\{s,b\}$. Then $F_{cb}\in J$, and $[F_{cb},F_{sc}]=-F_{sb}$. Hence every off-diagonal matrix unit belongs to $J$. Finally,
$$
[F_{ab},F_{ba}]=F_{aa}-F_{bb}\quad\text{if }a\neq b,
$$
so $J$ contains all diagonal differences. These elements span $\mathfrak s$, and therefore $\mathfrak s\subseteq J$.

Since the trace map induces $F/\mathfrak s\cong K$, any Lie ideal of $F^-$ containing $\mathfrak s$ is either $\mathfrak s$ or $F$. The assertion follows.
\end{proof}

\begin{lemma}\label{lem:center}
The center of $\T$ is $K1$.
\end{lemma}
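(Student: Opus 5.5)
The plan is to take a central element $z$ and use the path basis of Lemma~\ref{lem:path-basis} to write
$$
z=\lambda 1+\sum_{n\geq1}a_n e^n+\sum_{n\geq1}b_n (e^*)^n+A,\qquad A\in F,
$$
with only finitely many $a_n,b_n$ nonzero. The aim is to show first that every $a_n$ and $b_n$ vanishes, and then that $A=0$. The inclusion $K1\subseteq Z(\T)$ is obvious.

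\emph{Step 1: $A$ commutes with $F$ and is therefore zero.} Since $z$ commutes with every $F_{ij}$, I would compute the commutators of the non-$F$ part with matrix units. For $n\ge1$ we have $e^nF_{ij}=e^n\alpha_i\alpha_j^*$. If $i\geq2$ this equals $F_{i+n,j}$, because $e^ne^{i-2}f=e^{i+n-2}f$. If $i=1$ it is $0$, because $e^n w=0$. Similarly $F_{ij}e^n=\alpha_i(e^*)^n\alpha_j$ up to adjoints, which shifts the column index. The ghost powers shift indices the other way, annihilating when the index would drop to $1$.

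I would then look at the equation $[z,F_{11}]=0$. Since $e^nF_{11}=0$ and $(e^*)^nF_{11}=(e^*)^n w=0$, it reads
$$
\sum_n a_n F_{11}e^n+\sum_n b_n F_{11}(e^*)^n+[A,F_{11}]=0 .
$$
Here $F_{11}e^n=w e^n=0$ and $F_{11}(e^*)^n=0$, since $w$ is a sink and $v w=0$. So this gives only $[A,F_{11}]=0$, which says that $A$ has no off-diagonal entries in row or column $1$.

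\emph{Step 2: kill the $a_n,b_n$ using $F_{1j}$ for large $j$.} This is the main obstacle, and I would handle it by choosing the commutator carefully. Compute $[z,F_{12}]$ with $F_{12}=w f^*$. We have $e^nF_{12}=e^n w f^*=0$. On the other side, $F_{12}e^n=w f^* e^n=0$, since $f^*e=0$. For the ghosts, $F_{12}(e^*)^n=wf^*(e^*)^n=w(e^nf)^*=F_{1,n+2}$, while $(e^*)^nF_{12}=0$. Hence
$$
0=[z,F_{12}]=-\sum_n b_nF_{1,n+2}+[A,F_{12}].
$$
The next task is to pick index ranges so the $A$ term cannot cancel. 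I would use $F_{1m}$ with $m$ larger than every index appearing in $A$ and every $n$ with $b_n\neq0$. Then $[A,F_{1m}]$ is supported in columns $\le$ the size of $A$ or equal to $m$. Meanwhile $F_{1m}(e^*)^n=F_{1,m+n}$ lands in fresh columns, and that forces $b_n=0$. Symmetrically, $[z,F_{m1}]$ produces $e^nF_{m1}=F_{m+n,1}$ in fresh rows, which forces $a_n=0$.

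\emph{Step 3: finish.} After Step 2 we have $z=\lambda1+A$, so $A\in F\cap Z(\T)$ commutes with all of $F\cong\M$. A finitary matrix commuting with all $E_{ij}$ is scalar. A nonzero scalar is not finitary, so $A=0$. More concretely, $[A,F_{rs}]=0$ for $r,s$ outside the support of $A$ gives $AF_{rs}=F_{rs}A$, and the two sides have disjoint supports unless both vanish, which forces the rows and columns of $A$ to be zero. Hence $z=\lambda1$.
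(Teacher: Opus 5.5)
Your argument is correct in outline and essentially the paper's: commute $z$ with a matrix unit lying beyond the support of $A$, so that the coefficients of $e^n$ and $(e^*)^n$ land on distinct matrix units, and then use that the centralizer of $F$ in $F$ is zero. The paper uses a single diagonal unit $F_{rr}$ with $r$ large, which kills both families in one commutator; you use $F_{1m}$ and $F_{m1}$. Two details in your write-up are wrong, but neither is fatal.

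\textbf{Step~2.} You carried over $F_{12}e^n=0$ to $F_{1m}$, but this fails for larger $m$. For $m\geq n+2$,
\[
F_{1m}e^n=wf^*(e^*)^{m-2}e^n=F_{1,m-n}\neq0,
\]
which is the column shift you yourself noted in Step~1. Write $A=\sum c_{pq}F_{pq}$, and take $m$ beyond the support of $A$ with $m\geq n+2$ whenever $a_n\neq0$. The correct identity is then
\[
[z,F_{1m}]=\sum_p c_{p1}F_{pm}-\sum_n a_nF_{1,m-n}-\sum_n b_nF_{1,m+n}.
\]
Similarly, for $m$ large, $[z,F_{m1}]$ contains the extra terms $\sum_n b_nF_{m-n,1}$. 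The columns $m-n$, $m$, $m+n$ are pairwise distinct, so your conclusion still holds. In fact $[z,F_{1m}]=0$ alone already gives $a_n=b_n=0$.

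\textbf{Step~3.} Your ``more concrete'' justification is vacuous. If $r$ and $s$ both lie outside the support of $A$, then $AF_{rs}=F_{rs}A=0$ and nothing is learned. You need one index inside the support, as the paper does. Take any $i$ and any $j$ that is not a row index of $A$; then
\[
0=[A,F_{ij}]=AF_{ij}=\sum_p c_{pi}F_{pj},
\]
so column $i$ of $A$ vanishes. Your first justification is fine: the centralizer of all matrix units consists of scalars, and no nonzero scalar is finitary.

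\textbf{Step~1.} Despite its heading, this step only yields $[A,F_{11}]=0$, not $A=0$, and can simply be dropped.
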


\begin{proof}
Let $a\in Z(\T)$. By Lemma~\ref{lem:path-basis},
$$
a=\lambda+\sum_{m=1}^M\alpha_m e^m+\sum_{n=1}^N\beta_n(e^*)^n+f,
$$
where $f\in F$.
Write $f=\sum_{(i,j)\in\Omega}c_{ij}F_{ij}$ with $\Omega$ finite. Choose $r>M+1,N+1$ such that $r$ occurs neither as a first coordinate nor as a second coordinate of any pair in $\Omega$. Then $fF_{rr}=F_{rr}f=0$. Since $a$ commutes with $F_{rr}$,
$$
0=[a,F_{rr}]
=\sum_{m=1}^M\alpha_m(F_{r+m,r}-F_{r,r-m})
+\sum_{n=1}^N\beta_n(F_{r-n,r}-F_{r,r+n}).
$$
The displayed matrix units are distinct, so $\alpha_m=0$ and $\beta_n=0$ for all $m,n$. Hence $a=\lambda1+f$.

Now $f$ commutes with every $F_{ij}$. Fix $i$. Choose $j$ which occurs as no first coordinate of any pair in $\Omega$. Then $F_{ij}f=0$, and
$$
0=[f,F_{ij}]=fF_{ij}=\sum_p c_{pi}F_{pj}.
$$
Thus $c_{pi}=0$ for all $p$. Since $i$ was arbitrary, $f=0$. Therefore $a=\lambda1$, and $Z(\T)=K1$.
\end{proof}

\begin{proposition}\label{prop:noncentral-contains-s}
Let $L$ be a Lie ideal of $\T^-$. If $L\not\subseteq K1$, then $\mathfrak s\subseteq L$.
\end{proposition}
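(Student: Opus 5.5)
Pick $a\in L\setminus K1$. The plan is to produce a nonzero element of $L\cap F$ and then to run the matrix-unit extraction of Lemma~\ref{lem:F-lie-ideals} inside $L$. That lemma's argument only ever takes brackets with matrix units $F_{pq}\in F\subseteq\T$, so it applies verbatim to a Lie ideal of $\T^-$: if $0\neq A\in L\cap F$, then brackets of $A$ with elements of $F$ stay in $L\cap F$, and the same computation gives $\mathfrak s\subseteq L$. (Formally, $L\cap F$ is a Lie ideal of $F^-$, since $[F,L\cap F]\subseteq L\cap F$, because $F$ is an ideal. Lemma~\ref{lem:F-lie-ideals} then applies directly once $L\cap F\neq 0$.) So the whole proof reduces to showing $L\cap F\neq0$.

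To get such an element, note that $[\T,a]\subseteq L$ and $[\T,a]\subseteq[\T,\T]=F$ by Proposition~\ref{prop:derived}. Hence it suffices to show that $[x,a]\neq0$ for some $x\in\T$, that is, $a\notin Z(\T)$. This is exactly Lemma~\ref{lem:center}: $Z(\T)=K1$ and $a\notin K1$. Concretely, one may even take $x=F_{rr}$ for suitable large $r$, as in the proof of Lemma~\ref{lem:center}, giving $[a,F_{rr}]\in L\cap F$. If that bracket vanished for all large $r$, the center argument would force $a=\lambda1+f$ with $f\in F$ commuting with all $F_{ij}$, hence $f=0$.

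So the steps, in order, are: (1) choose $a\in L\setminus K1$; (2) use Lemma~\ref{lem:center} to find $x$ with $0\neq[x,a]$, and Proposition~\ref{prop:derived} to see $[x,a]\in L\cap F$; (3) observe that $L\cap F$ is a nonzero Lie ideal of $F^-$, because $F\triangleleft\T$; (4) apply Lemma~\ref{lem:F-lie-ideals} to conclude $\mathfrak s\subseteq L\cap F\subseteq L$.

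There is essentially no obstacle. The only point needing care is step (3), namely that the Lie ideal property of $L\cap F$ in $F^-$ follows from $[F,L]\subseteq L$ together with $[F,L]\subseteq F$. Both hold, since $F$ is a two-sided ideal of $\T$.
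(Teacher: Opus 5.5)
Your proof is correct and is essentially the paper's own proof. You use $a\notin Z(\T)=K1$ to get a nonzero $[x,a]\in L\cap F$, observe that $L\cap F$ is then a nonzero Lie ideal of $F^-$, and apply Lemma~\ref{lem:F-lie-ideals} to get $\mathfrak s\subseteq L$.

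The only slip is your optional aside that $x=F_{rr}$ with $r$ large always works. For $a=1+w$ one has $[a,F_{rr}]=0$ for all $r\geq2$. So vanishing for large $r$ only forces $a\in K1+F$, and you would need a general matrix unit such as $F_{12}$ instead. Step (2) as you actually state it does not rely on that aside.
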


\begin{proof}
Choose $a\in L\setminus K1$. By Lemma~\ref{lem:center}, $a\notin Z(\T)$, so there is $b\in\T$ with $[a,b]\neq0$. Since $L$ is a Lie ideal of $\T^-$, $[a,b]\in L$. Since $\T/F$ is commutative, $[a,b]\in F$. Hence $L\cap F\neq0$.

Now $L\cap F$ is a Lie ideal of $F^-$. Indeed, if $x\in L\cap F$ and $y\in F$, then $[x,y]\in L$, because $L\triangleleft\T^-$, and $[x,y]\in F$, because $F$ is a (non-unital) subalgebra of $\T$. Thus $[x,y]\in L\cap F$. By Lemma~\ref{lem:F-lie-ideals}, we get $\mathfrak s\subseteq L\cap F\subseteq L$.
\end{proof}

Before we go further, let us make the following observations: Let $\pi:\T\to\T/\mathfrak s$ be the quotient map. Set
$c=\pi(w)$, $z_0=\pi(1)$, $z_n=\pi(e^n)$ for $n>0$, and
$z_{-n}=\pi((e^*)^n)$ for $n>0$. By
Lemmas~\ref{lem:path-basis} and~\ref{lem:s-trace-zero}, these elements
span $\T/\mathfrak s$. We claim that the sum is direct. Indeed, since $\mathfrak s\subseteq F=\ker\rho$, the map $\rho$ induces a
linear map
$$
\overline{\rho}:\T/\mathfrak s\longrightarrow K[t,t^{-1}]
$$
defined by
$$
\overline{\rho}(c)=0,\qquad
\overline{\rho}(z_0)=1,\qquad
\overline{\rho}(z_n)=t^n,\qquad
\overline{\rho}(z_{-n})=t^{-n}.
$$
Suppose that
$$
\alpha c+\lambda_0z_0
+\sum_{n=1}^{N}\lambda_nz_n
+\sum_{n=1}^{M}\mu_nz_{-n}=0.
$$
Applying $\overline{\rho}$ to this equality yields
$$
\lambda_0+\sum_{n=1}^{N}\lambda_nt^n
+\sum_{n=1}^{M}\mu_nt^{-n}=0.
$$
It follows that $\lambda_0=\lambda_n=\mu_n=0$; and so the original relation therefore reduces to $\alpha c=0$, whence $\alpha w\in\mathfrak s$. Under the identification $F\cong\mathrm M_\infty(K)$, the vertex $w$ corresponds to $E_{11}$, so $\operatorname{tr}(w)=1$. Since every element of $\mathfrak s$ has trace zero, we have $0=\operatorname{tr}(\alpha w)=\alpha\operatorname{tr}(w)=\alpha$. Thus all the coefficients vanish, and the displayed sum is direct.

\begin{lemma}\label{lem:bracket-quotient}
For all $m,n\in\mathbb Z$, one has $[z_m,z_n]=-m\delta_{m+n,0}c$ and $[c,z_n]=0$.
\end{lemma}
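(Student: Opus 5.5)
The plan is to exploit that every commutator in $\T$ lies in $F$ (Proposition~\ref{prop:derived}), and that modulo $\mathfrak s$ an element of $F$ is determined by its trace (Lemma~\ref{lem:s-trace-zero}). Concretely, for $x\in F$ we have $\pi(x)=\operatorname{tr}(x)\,c$, because $x-\operatorname{tr}(x)F_{11}\in\mathfrak s$ and $F_{11}=w$. So for basis elements $x,y$ from Lemma~\ref{lem:path-basis}, the relation $[\pi(x),\pi(y)]=\pi([x,y])=\operatorname{tr}([x,y])\,c$ reduces everything to computing one trace in each case.

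\textbf{Easy cases.} First, $[c,z_n]=0$ for all $n$. This is clear for $n=0$ since $z_0=\pi(1)$. For $n\neq0$, the relations (E1), (E2) give $we=ew=0$ and $we^*=e^*w=0$, because $s(e)=r(e)=v$ and $vw=0$. Hence $w$ commutes with every $e^n$ and $(e^*)^n$. Likewise $z_0$ is central, so $[z_0,z_n]=0$, in agreement with the formula at $m=0$. If $m,n$ are both positive or both negative, then $e^m,e^n$ (respectively $(e^*)^m,(e^*)^n$) commute, so the bracket is $0$; this also agrees with the formula, since $m+n\neq0$.

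\textbf{Mixed case.} The remaining case is $[e^m,(e^*)^n]$ with $m,n\geq1$. The key identity, proved by induction on $n$ from $v=ee^*+ff^*$, is
$$
e^n(e^*)^n=v-\sum_{k=0}^{n-1}e^kff^*(e^*)^k=v-\sum_{i=2}^{n+1}F_{ii}.
$$
\begin{itemize}
\item If $m\geq n$, we write $e^m(e^*)^n=e^{m-n}e^n(e^*)^n$ and use $(e^*)^ne^m=e^{m-n}$ (from $e^*e=v$). This gives
$$
[e^m,(e^*)^n]=-e^{m-n}\sum_{i=2}^{n+1}F_{ii}=-\sum_{i=2}^{n+1}F_{i+m-n,\,i},
$$
using $e^{k}\alpha_i=\alpha_{i+k}$.
\item If $m<n$, we write $e^m(e^*)^n=e^m(e^*)^m(e^*)^{n-m}$. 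Symmetrically, this gives $-\sum_{i=2}^{m+1}F_{i,\,i+n-m}$.
\end{itemize}
In both subcases the trace is $-n$ when $m=n$ and $0$ otherwise. Hence $[z_m,z_{-n}]=-m\,\delta_{m,n}\,c$ for $m,n\geq1$. The case of a negative first index then follows by antisymmetry: $[z_{-m},z_m]=mc=-(-m)c$. This matches $-m\delta_{m+n,0}c$ in all sign combinations.

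The only real work is the identity for $e^n(e^*)^n$ and keeping track of the indices $e^k\alpha_i=\alpha_{i+k}$. Beyond that, the main care is checking that the sign conventions agree with the stated formula in each case.
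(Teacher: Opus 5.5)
Your proof is correct and follows essentially the same route as the paper: the same inductive identity $e^n(e^*)^n=v-\sum_{k=0}^{n-1}e^kff^*(e^*)^k$, the same split into the cases $m\geq n$ and $m<n$ producing explicit matrix units, and skew-symmetry for the remaining sign combinations. The only difference is cosmetic: you reduce modulo $\mathfrak s$ through the trace ($\pi(x)=\operatorname{tr}(x)\,c$ for $x\in F$), whereas the paper notes directly that off-diagonal units lie in $\mathfrak s$ and that $F_{ii}\equiv w \pmod{\mathfrak s}$.
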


\begin{proof}
We have that $[w,e]=[w,e^*]=0$, while $[w,f]=-f=-F_{21}$ and $[w,f^*]=f^*=F_{12}$; also $[w,F]\subseteq[F,F]=\mathfrak s$. Hence $[c,z_n]=0$ for all $n$. It follows that $c$ is central.

We now compute the brackets involving positive and negative powers. Put $q=ff^*$. We first claim that, for every $a>0$,
$$
e^a(e^*)^a=v-\sum_{r=0}^{a-1}e^rq(e^*)^r.
$$
For $a=1$, this is the relation $ee^*=v-ff^*$. If the formula holds for $a$, then
$$
\begin{aligned}
e^{a+1}(e^*)^{a+1}
&=e\left(v-\sum_{r=0}^{a-1}e^rq(e^*)^r\right)e^* \\
&=ee^*-\sum_{r=0}^{a-1}e^{r+1}q(e^*)^{r+1} \\
&=v-\sum_{r=0}^{a}e^rq(e^*)^r,
\end{aligned}
$$
which proves the claim by induction.

Let $b\geq a>0$. Since $(e^*)^ae^b=e^{b-a}$, the preceding identity yields
$$
\begin{aligned}
[(e^*)^a,e^b]
&=e^{b-a}-e^{b-a}e^a(e^*)^a \\
&=\sum_{r=0}^{a-1}e^{b-a+r}q(e^*)^r \\
&=\sum_{r=0}^{a-1}F_{b-a+r+2,r+2}.
\end{aligned}
$$
If $b>a$, the two indices of every matrix unit in this sum are distinct. For $i\neq j$, we have $F_{ij}=[F_{ii},F_{ij}]\in\mathfrak s$, so $[(e^*)^a,e^b]\in\mathfrak s$ whenever $b>a$.

If $b=a$, the same calculation shows $[(e^*)^a,e^a]=\sum_{r=0}^{a-1}F_{r+2,r+2}$. For every $r\geq0$, one has $F_{r+2,r+2}-w =[F_{r+2,1},F_{1,r+2}]\in\mathfrak s$. Consequently, $[(e^*)^a,e^a]\equiv aw\pmod{\mathfrak s}$.

It remains to consider $a>b>0$. In this case $(e^*)^ae^b=(e^*)^{a-b}$, while $e^b(e^*)^a=e^b(e^*)^b(e^*)^{a-b}$. Applying the preceding formula with exponent $b$ we have
$$
\begin{aligned}
[(e^*)^a,e^b]
&=(e^*)^{a-b}-e^b(e^*)^b(e^*)^{a-b} \\
&=\sum_{r=0}^{b-1}e^rq(e^*)^{r+a-b} \\
&=\sum_{r=0}^{b-1}F_{r+2,r+a-b+2}.
\end{aligned}
$$
Since $a>b$, every matrix unit in this sum is off-diagonal and hence belongs to $\mathfrak s$.

Passing to $\T/\mathfrak s$, we have therefore proved that
$$
[z_{-a},z_b]
=
\begin{cases}
ac, & a=b,\\
0, & a\neq b,
\end{cases}
\qquad \text{where }a,b>0.
$$
Equivalently, we have $[z_{-a},z_b]=a\delta_{a,b}c$. By skew-symmetry, it follows that $[z_a,z_{-b}]=-a\delta_{a,b}c$. Brackets between two positive powers of $e$, or between two positive powers of $e^*$, vanish because powers of the same element commute. Moreover, $z_0=\pi(1)$ is central. Combining these cases yields
$$
[z_m,z_n]=-m\delta_{m+n,0}c,
\qquad\text{for all } m,n\in\mathbb Z.
$$
\end{proof}

Let $\mathcal V=\Span\{z_n:n\in\mathbb Z\}$. According to Lemma~\ref{lem:bracket-quotient}, we get
$$
\T/\mathfrak s=\mathcal V\oplus Kc,
\qquad [u,v]=\beta(u,v)c,\qquad [c,\T/\mathfrak s]=0,
$$
where $\beta:\mathcal V\times\mathcal V\to K$ is given by $\beta(z_m,z_n)=-m\delta_{m+n,0}$. Write $\rad(\beta)=\{u\in\mathcal V:\beta(u,v)=0\text{ for all }v\in\mathcal V\}$. If $u=\sum_m a_mz_m$, then $\beta(u,z_n)=na_{-n}$. Hence
$$
\rad(\beta)=Kz_0\quad\text{if }\operatorname{char}K=0,
\quad
\text{and}
\quad
\rad(\beta)=\Span\{z_{m\ell}:m\in\mathbb Z\}\quad\text{if }\operatorname{char}K=\ell>0.
$$
Consequently,
$$
Z(\T/\mathfrak s)=\rad(\beta)\oplus Kc.
$$

\begin{lemma}\label{lem:heisenberg-ideals}
Let $\mathfrak h=\mathcal V\oplus Kc$ be a Lie algebra such that $[u,v]=\beta(u,v)c$ and $[c,\mathfrak h]=0$. Then the Lie ideals of $\mathfrak h$ are precisely the subspaces contained in $Z(\mathfrak h)$ and the subspaces $Kc\oplus W$, where $W\leq\mathcal V$.
\end{lemma}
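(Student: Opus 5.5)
The plan is to prove the two inclusions separately. Throughout I use the structural facts $[\mathfrak h,\mathfrak h]\subseteq Kc$ and $Z(\mathfrak h)=\rad(\beta)\oplus Kc$. The second fact follows from the bracket formula exactly as in the computation preceding the lemma: an element $u+\gamma c$ is central if and only if $\beta(u,\cdot)=0$.

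First I check that every listed subspace is a Lie ideal.
\begin{itemize}
\item If $U\subseteq Z(\mathfrak h)$, then $[\mathfrak h,U]=0\subseteq U$, so $U$ is an ideal.
\item If $W\leq\mathcal V$, then $[\mathfrak h,Kc\oplus W]\subseteq[\mathfrak h,\mathfrak h]\subseteq Kc\subseteq Kc\oplus W$, so $Kc\oplus W$ is an ideal.
\end{itemize}

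Conversely, let $L$ be a Lie ideal of $\mathfrak h$. I split into two cases according to whether $c\in L$.

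Case $c\in L$. Let $p:\mathfrak h\to\mathcal V$ be the projection along $Kc$, and set $W=p(L)\leq\mathcal V$. Each $x\in L$ can be written $x=p(x)+\gamma c$. Since $c\in L$, this gives $p(x)\in L$. Hence $W\subseteq L$, and so $L=Kc\oplus W$.

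Case $c\notin L$. I claim that $L\subseteq Z(\mathfrak h)$. Take $x=u+\gamma c\in L$ with $u\in\mathcal V$. For every $y\in\mathcal V$, the bracket $[x,y]=\beta(u,y)c$ lies in $L$. If $\beta(u,y)\neq0$ for some $y$, then $c\in L$, which is a contradiction. Therefore $\beta(u,\cdot)=0$, so $u\in\rad(\beta)$ and $x\in Z(\mathfrak h)$.

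The two cases together give the classification. There is no real obstacle in this argument. The only points needing care are the description of $Z(\mathfrak h)$ and the fact that the two families overlap: a subspace $Kc\oplus W$ with $W\subseteq\rad(\beta)$ belongs to both. This overlap is harmless, because the statement only says that the ideals are exactly the union of the two families.
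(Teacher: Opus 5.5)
Your proof is correct and is essentially the paper's argument. The paper splits on whether $J\subseteq Z(\mathfrak h)$; if not, it uses an element $u+\lambda c$ with $u\notin\rad(\beta)$ to force $c\in J$, and then applies the same projection onto $\mathcal V$ that you use. Your case split on whether $c\in L$ is just the contrapositive arrangement of that argument.
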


\begin{proof}
The listed subspaces are Lie ideals. Conversely, let $J\triangleleft\mathfrak h$. If $J\subseteq Z(\mathfrak h)$, there is nothing to prove. Otherwise choose $x=u+\lambda c\in J$ with $u\notin\rad(\beta)$. Then $\beta(u,v)\neq0$ for some $v\in\mathcal V$, and hence $[x,v]=\beta(u,v)c\in J$. Thus $c\in J$.

Let $W$ be the image of $J$ under the projection $\mathcal V\oplus Kc\to\mathcal V$. If $w\in W$, then $w+\alpha c\in J$ for some $\alpha\in K$; since $c\in J$, it follows that $w\in J$. Hence $Kc\oplus W\subseteq J$. The reverse inclusion follows from the definition of $W$. Therefore $J=Kc\oplus W$.
\end{proof}

Set
$$
P_K=
\begin{cases}
K1, & \text{if }\ch K=0,\\
\Span\{1,e^{m\ell},(e^*)^{m\ell}:m\geq1\}, & \text{if }\ch K=\ell>0.
\end{cases}
$$
Then $\pi(P_K)=\rad(\beta)$, and $\pi$ restricts to a vector-space isomorphism $P_K\oplus Kw\to Z(\T/\mathfrak s)$.

\begin{theorem}\label{thm:classification}
The Lie ideals of $\T^-$ are precisely the following:
\begin{enumerate}[label=\rm(\roman*)]
\item $0$ and $K1$;
\item $\mathfrak s+U$, where $U\leq P_K\oplus Kw$ and $w\notin U$;
\item $\rho^{-1}(W)$, where $W\leq K[t,t^{-1}]$ is a $K$-subspace.
\end{enumerate}
\end{theorem}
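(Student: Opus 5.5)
The plan has two parts: first check that every listed subspace is a Lie ideal, then show that every Lie ideal $L$ is on the list. Both parts reduce to the Heisenberg-type quotient $\T/\mathfrak s$ via Lemma~\ref{lem:heisenberg-ideals}.

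\emph{The listed subspaces are Lie ideals.} The space $0$ is trivial, and $K1$ is central. For $U\le P_K\oplus Kw$, the image $\pi(U)$ lies in $Z(\T/\mathfrak s)$ because $\pi$ maps $P_K\oplus Kw$ into this center. Hence $[\T,\mathfrak s+U]\subseteq \mathfrak s+[\T,U]$, and $[\T,U]\subseteq\mathfrak s$, so $\mathfrak s+U$ is a Lie ideal. For $\rho^{-1}(W)$ we use Proposition~\ref{prop:derived}: $[\T,\T]=F=\ker\rho\subseteq\rho^{-1}(W)$.

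\emph{Every Lie ideal is on the list.} Let $L$ be a Lie ideal. If $L\subseteq K1$, then $L=0$ or $L=K1$. Otherwise Proposition~\ref{prop:noncentral-contains-s} gives $\mathfrak s\subseteq L$, and $\pi(L)$ is a Lie ideal of $\mathfrak h=\T/\mathfrak s=\mathcal V\oplus Kc$. By Lemma~\ref{lem:heisenberg-ideals}, there are two cases.

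In the first case $\pi(L)=Kc\oplus W'$ with $W'\le\mathcal V$. Since $\pi^{-1}(Kc)=\mathfrak s+Kw=F$ by Lemma~\ref{lem:s-trace-zero}, we get $L\supseteq F$, so $L=\rho^{-1}(\rho(L))$ with $\rho(L)=\overline\rho(W')$. This is type (iii).

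In the second case $\pi(L)\subseteq Z(\mathfrak h)$ and $c\notin\pi(L)$. Let $U$ be the preimage of $\pi(L)$ under the isomorphism $P_K\oplus Kw\to Z(\mathfrak h)$. Then $L=\pi^{-1}(\pi(U))=\mathfrak s+U$, and $w\notin U$ because $c\notin\pi(L)$. This is type (ii).

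There remains the subcase where $\pi(L)$ is central but contains $c$. By Lemma~\ref{lem:heisenberg-ideals} it then also has the form $Kc\oplus W'$, so it falls under type (iii) as well. I would note this overlap explicitly, so that the condition $w\notin U$ in (ii) loses nothing.

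\emph{Main obstacle.} The main point of care is making sure the three families together cover everything, and handling the overlap and degenerate cases correctly. One example is $L=\mathfrak s$, which is type (ii) with $U=0$. Another is $K1$ when $\ch K=0$: it contains no $\mathfrak s$, so it must be listed separately. A third is that $\mathfrak s+K1$ is type (ii) with $U=K1$ and is not of the form $\rho^{-1}(W)$, since $w\notin\mathfrak s+K1$. Beyond that, the argument amounts to assembling Proposition~\ref{prop:noncentral-contains-s}, Lemma~\ref{lem:heisenberg-ideals}, and the identity $\pi^{-1}(Kc)=F$.
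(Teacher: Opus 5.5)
Your proposal is correct and follows the paper's proof essentially step for step. You use the same check that the three families are Lie ideals, then Proposition~\ref{prop:noncentral-contains-s} and Lemma~\ref{lem:heisenberg-ideals} on $\T/\mathfrak s$, and you send the overlap case (central image containing $c$) to type (iii) via $w\in L\Rightarrow F=\mathfrak s\oplus Kw\subseteq L$, exactly as the paper treats its subcase $w\in U$; the only quibble is that $K1$ must be listed separately in every characteristic, not just when $\ch K=0$.
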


\begin{proof}
The listed subspaces are Lie ideals. The spaces $0$ and $K1$ are central. If $U\leq P_K\oplus Kw$, then $(\mathfrak s+U)/\mathfrak s\subseteq Z(\T/\mathfrak s)$, so $\mathfrak s+U$ is a Lie ideal. Finally, if $W\leq K[t,t^{-1}]$, then $\rho^{-1}(W)$ is a Lie ideal, because $\T/F$ is abelian and $F=[\T,\T]\subseteq\rho^{-1}(W)$.

Conversely, let $L\triangleleft\T^-$. If $L\subseteq K1$, then $L=0$ or $L=K1$. Otherwise Proposition~\ref{prop:noncentral-contains-s} gives $\mathfrak s\subseteq L$. Hence $L/\mathfrak s$ is a Lie ideal of $\T/\mathfrak s$. By Lemma~\ref{lem:heisenberg-ideals}, either $L/\mathfrak s\subseteq Z(\T/\mathfrak s)$, or $Kc\subseteq L/\mathfrak s$.

Assume first that $L/\mathfrak s\subseteq Z(\T/\mathfrak s)$. Since $P_K\oplus Kw\to Z(\T/\mathfrak s)$ is an isomorphism, $L=\mathfrak s+U$ for a unique subspace $U\leq P_K\oplus Kw$. If $w\notin U$, then $L$ is of type {\rm(ii)}. If $w\in U$, then $F=\mathfrak s\oplus Kw\subseteq L$, and hence $L=\rho^{-1}(\rho(L))$, so $L$ is of type {\rm(iii)}.

It remains to consider the case $Kc\subseteq L/\mathfrak s$. Since $c=w+\mathfrak s$ and $\mathfrak s\subseteq L$, we get $w\in L$. Thus $F=\mathfrak s\oplus Kw\subseteq L$. Therefore $L=\rho^{-1}(\rho(L))$, with $\rho(L)\leq K[t,t^{-1}]$, and $L$ is of type {\rm(iii)}.
\end{proof}

\begin{corollary}\label{cor:char0}
Assume $\ch K=0$. Then the Lie ideals of $\T^-$ not containing $F$ are
$$
0,\qquad K1,\qquad \mathfrak s,\qquad \mathfrak s+K(1+\lambda w)\quad\text{with}\quad\lambda\in K.
$$
The Lie ideals containing $F$ are precisely the spaces $\rho^{-1}(W)$, where $W\leq K[t,t^{-1}]$.
\end{corollary}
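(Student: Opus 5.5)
Corollary~\ref{cor:char0} is a direct specialization of Theorem~\ref{thm:classification}. In characteristic $0$ we have $P_K=K1$, so $P_K\oplus Kw=K1\oplus Kw$ is two-dimensional. The plan is to enumerate the subspaces $U\leq K1\oplus Kw$ with $w\notin U$ and read off the type~(ii) ideals. We then check which ideals in the list contain $F$. Types~(i) and~(ii) must not contain $F$, and type~(iii) is exactly the family of ideals containing $F$.

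\textbf{Step 1: type~(ii).} The subspaces of the plane $K1\oplus Kw$ are $0$, the lines, and the whole plane. The whole plane contains $w$, so it is excluded, as is the line $Kw$. Every other line has the form $K(1+\lambda w)$ with $\lambda\in K$: a line not equal to $Kw$ has a spanning vector with nonzero $1$-coefficient, which we rescale to $1$. So type~(ii) gives exactly $\mathfrak s$ (for $U=0$) and $\mathfrak s+K(1+\lambda w)$ for $\lambda\in K$.

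\textbf{Step 2: these do not contain $F$.} For type~(i), $0$ and $K1$ are at most one-dimensional while $F$ is infinite-dimensional. For type~(ii), suppose $w\in\mathfrak s+K(1+\lambda w)$, say $w=s+\mu(1+\lambda w)$ with $s\in\mathfrak s$. Applying $\rho$ kills $w$ and $s$ (since $\mathfrak s\subseteq F=\ker\rho$) and gives $0=\mu$. Then $w=s\in\mathfrak s$, contradicting $\tr(w)=1$. The case $U=0$ is covered by the same trace argument, since $w\notin\mathfrak s$.

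\textbf{Step 3: type~(iii) is exactly the ideals containing $F$.} Every $\rho^{-1}(W)$ contains $\ker\rho=F$ by Lemma~\ref{lem:quotient}. Conversely, an ideal containing $F$ is not of type~(i) or~(ii) by Step~2, so it is of type~(iii) by Theorem~\ref{thm:classification}. Directly, any subspace $L\supseteq F$ satisfies $L=\rho^{-1}(\rho(L))$.

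There is no real obstacle here. The only points requiring care are that no two listed lines coincide and that the line $Kw$ is correctly excluded. Distinct values of $\lambda$ give distinct lines in the plane, hence distinct ideals, because $\pi$ restricts to an isomorphism $P_K\oplus Kw\to Z(\T/\mathfrak s)$.
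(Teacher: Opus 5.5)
Your proposal is correct and follows the paper's route: the corollary is Theorem~\ref{thm:classification} specialized to $P_K=K1$. One enumerates the subspaces of $K1\oplus Kw$ avoiding $w$ (namely $0$ and the lines $K(1+\lambda w)$), and notes that type~(iii) is exactly the family containing $F=\ker\rho$. Your $\rho$-plus-trace argument that $w\notin\mathfrak s+K(1+\lambda w)$ is the same point the paper makes through $L(U)\cap F=\mathfrak s$.
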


\begin{corollary}\label{cor:charpos}
Assume $\ch K=\ell>0$, and put $P_K=\Span\{1,e^{m\ell},(e^*)^{m\ell}:m\geq1\}$. Then the Lie ideals of $\T^-$ not containing $F$ are $0$, $K1$, and the spaces
$$
\mathfrak s+U,\qquad U\leq P_K\oplus Kw\quad\text{with}\quad w\notin U.
$$
The Lie ideals containing $F$ are precisely the spaces $\rho^{-1}(W)$, where $W\leq K[t,t^{-1}]$.
\end{corollary}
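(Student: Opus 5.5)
The plan is to read the corollary off Theorem~\ref{thm:classification}, with $\ch K=\ell>0$ and $P_K=\Span\{1,e^{m\ell},(e^*)^{m\ell}:m\geq1\}$. The theorem lists every Lie ideal of $\T^-$ in three families: (i) $0$ and $K1$, (ii) the spaces $\mathfrak s+U$ with $U\leq P_K\oplus Kw$ and $w\notin U$, and (iii) the inverse images $\rho^{-1}(W)$. So it suffices to decide which members of each family contain $F$. I will show that the members of (i) and (ii) never contain $F$, and that the members of (iii) always do. Both assertions of the corollary then follow by sorting the list.

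\emph{Family (iii).} Every $\rho^{-1}(W)$ contains $\rho^{-1}(0)=\ker\rho=F$ by Lemma~\ref{lem:quotient}.

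\emph{Family (i).} The spaces $0$ and $K1$ do not contain $F$, since $F$ is infinite-dimensional (alternatively, $w\notin K1$ by Lemma~\ref{lem:path-basis}).

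\emph{Family (ii).} This is the step requiring care. I will prove the identity
\[
(\mathfrak s+U)\cap F=\mathfrak s \qquad\text{whenever } w\notin U .
\]
Take $x=s+u$ with $s\in\mathfrak s$, $u\in U$, and suppose $x\in F$. Then $u\in F=\ker\rho$.

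Write $u=p+\lambda w$ with $p\in P_K$ and $\lambda\in K$. Applying $\rho$ gives $\rho(p)=0$, because $\rho(w)=0$.

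Next I show that $\rho$ is injective on $P_K$. It sends the spanning elements $1$, $e^{m\ell}$, $(e^*)^{m\ell}$ to the monomials $1$, $t^{m\ell}$, $t^{-m\ell}$, which are linearly independent in $K[t,t^{-1}]$. Hence those spanning elements form a basis of $P_K$, and $\rho|_{P_K}$ is injective.

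It follows that $p=0$, so $u=\lambda w\in U\cap Kw$. Since $w\notin U$, we get $U\cap Kw=0$, hence $u=0$ and $x=s\in\mathfrak s$. This proves $(\mathfrak s+U)\cap F=\mathfrak s$.

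Finally, $w\in F$ but $w\notin\mathfrak s$, because $\tr(w)=1$ while every element of $\mathfrak s$ has trace zero (Lemma~\ref{lem:s-trace-zero}). Therefore $F\not\subseteq\mathfrak s+U$.

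\emph{Conclusion.} Combining the three cases: the Lie ideals of $\T^-$ not containing $F$ are exactly $0$, $K1$, and the spaces $\mathfrak s+U$ with $U\leq P_K\oplus Kw$ and $w\notin U$. A Lie ideal containing $F$ cannot lie in family (i) or (ii), so by Theorem~\ref{thm:classification} it is of the form $\rho^{-1}(W)$. Conversely, every $W\leq K[t,t^{-1}]$ gives such an ideal, since $\rho^{-1}(W)$ is a Lie ideal by the theorem and contains $F$.

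The only nonformal point is the injectivity of $\rho$ on $P_K$ together with the trace argument excluding $w$ from $\mathfrak s$; I expect no real obstacle beyond these.
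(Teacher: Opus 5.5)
Your proposal is correct and follows essentially the paper's route. The corollary is read off from Theorem~\ref{thm:classification} by sorting its three families according to whether they contain $F$. The key fact $(\mathfrak s+U)\cap F=\mathfrak s$ for $w\notin U$ is recorded in the paper right after the corollaries, where it is deduced from $F\cap(P_K\oplus Kw)=Kw$; this is exactly your injectivity-of-$\rho|_{P_K}$ argument.
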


We record the inclusion relations determined by
Theorem~\ref{thm:classification}. For a subspace
$U\leq P_K\oplus Kw$ with $w\notin U$, put $L(U)=\mathfrak s\oplus U$, and, for a subspace $W\leq K[t,t^{-1}]$, put $J(W)=\rho^{-1}(W)$.
Then
\begin{align*}
L(U_1)\subseteq L(U_2)
&\quad\text{if and only if}\quad U_1\subseteq U_2,\\
J(W_1)\subseteq J(W_2)
&\quad\text{if and only if}\quad W_1\subseteq W_2,\\
L(U)\subseteq J(W)
&\quad\text{if and only if}\quad \rho(U)\subseteq W,
\end{align*}
and
$$
L(U)\cap F=\mathfrak s,\qquad
L(U)+F=J(\rho(U)).
$$
Moreover,
$$
K1\subseteq L(U)
\quad\text{if and only if}\quad
1\in U,
\qquad
K1\subseteq J(W)
\quad\text{if and only if}\quad
1\in W.
$$
The first two equivalences follow by passing to the quotients by
$\mathfrak s$ and $F$, respectively. The third follows from
$\mathfrak s\subseteq F$ and $\rho(L(U))=\rho(U)$. Since $F\cap(P_K\oplus Kw)=Kw$ and $w\notin U$, one has $U\cap F=0$, which proves the first identity.
The second follows from $\ker\rho=F$, since
$U+F=\rho^{-1}(\rho(U))$. The equivalence involving $L(U)$ follows
from the injectivity of $\pi$ on $P_K\oplus Kw$, while the one
involving $J(W)$ follows from $\rho(1)=1$.

Thus $F=J(0)$ and $L(U)$ are incomparable whenever $U\neq0$. If
$\operatorname{char}K=0$, the interval from $\mathfrak s$ to
$J(K)=\rho^{-1}(K)$ is the subspace lattice of the two-dimensional
space $J(K)/\mathfrak s$. The ideals corresponding to the
one-dimensional subspaces are
$$
F=\mathfrak s\oplus Kw
\quad\text{and}\quad
\mathfrak s\oplus K(1+\lambda w),
\qquad \lambda\in K,
$$
and they are pairwise incomparable. If $\operatorname{char}K=\ell>0$,
the preceding inclusion formulas remain valid, with $U$ ranging over
the subspaces of the larger space $P_K\oplus Kw$ that do not contain
$w$.

\begin{corollary}[Graded Lie ideals]\label{cor:graded-lie-ideals}
Give $\T$ its natural $\mathbb Z$-grading. For $n\in\mathbb Z$, put $F_n=\Span\{F_{ij}:i-j=n\}$. Then
$$
{\T}_n=K e^n\oplus F_n\quad\text{if }n>0,\qquad
{\T}_0=K1\oplus F_0,\qquad
{\T}_n=K(e^*)^{-n}\oplus F_n\quad\text{if }n<0.
$$
Moreover, $F=\bigoplus_{n\in\mathbb Z}F_n$, and $\mathfrak s$ is graded, with $\mathfrak s_n=F_n$ for $n\neq0$ and $\mathfrak s_0=\left\{\sum_i a_iF_{ii}:\sum_i a_i=0\right\}$. The graded Lie ideals of $\T^-$ are precisely $0$, $K1$, the spaces $\mathfrak s+U$, where $U$ is a graded subspace of $P_K\oplus Kw$ and $w\notin U$, and the spaces $\rho^{-1}(W)$, where $W$ is a graded subspace of $K[t,t^{-1}]$.
\end{corollary}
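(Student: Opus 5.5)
The plan is to prove the graded statements in two stages: first the decomposition of the homogeneous components, then the classification, obtained by intersecting the list of Theorem~\ref{thm:classification} with the graded subspaces.

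\textbf{Homogeneous components.} By Lemma~\ref{lem:path-basis}, $\mathcal B$ is a basis consisting of homogeneous elements. We have $\deg e^n=n$ and $\deg (e^*)^n=-n$. Since $\deg\alpha_i=i-1$ (with $\deg w=0$ and $\deg e^{i-2}f=i-1$), we get $\deg F_{ij}=i-j$. Collecting basis elements by degree gives the stated form of $\T_n$ and shows $F=\bigoplus_n F_n$. For $\mathfrak s$:
\begin{itemize}
\item $\mathfrak s$ is spanned by the off-diagonal units $F_{ij}$ with $i\neq j$, which are homogeneous of nonzero degree $i-j$, together with the diagonal differences, which lie in degree $0$;
\item hence $\mathfrak s$ is graded, and every off-diagonal unit lies in $\mathfrak s$, so $\mathfrak s_n=F_n$ for $n\neq0$;
\item $\mathfrak s_0=\mathfrak s\cap F_0$ is the trace-zero part of the diagonal.
\end{itemize}

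\textbf{Classification.} Any graded Lie ideal is a Lie ideal, so it appears in the list of Theorem~\ref{thm:classification}, and it remains to decide which members are graded. The spaces $0$ and $K1$ are graded. For type (ii), take $L=\mathfrak s+U$ with $U\leq P_K\oplus Kw$. The space $P_K\oplus Kw$ is spanned by homogeneous elements, and the sum $\mathfrak s\oplus(P_K\oplus Kw)$ is direct by the earlier independence argument for $\pi$. Both summands are graded, so $\pi$ restricted to $P_K\oplus Kw$ is a degree-preserving isomorphism onto $Z(\T/\mathfrak s)$, and $\T/\mathfrak s$ inherits a grading. Since $\mathfrak s$ is graded, $L$ is graded if and only if $L/\mathfrak s$ is a graded subspace of $\T/\mathfrak s$, which holds if and only if $U$ is graded. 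For type (iii), $\rho$ is a graded homomorphism for the grading $\deg t=1$, and it is surjective with graded kernel $F$. Hence $\rho^{-1}(W)$ is graded if and only if $W$ is graded. One direction of each equivalence is immediate. For the other, if $L\supseteq\ker\rho$ is graded then its image $\rho(L)$ is graded, and if $\rho^{-1}(W)$ is graded then $W=\rho(\rho^{-1}(W))$ is graded.

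\textbf{Main obstacle.} The one point needing care is the subtlety in type (ii). The subspace $U$ attached to $L$ is uniquely determined as $U=L\cap(P_K\oplus Kw)$, because $\mathfrak s\cap(P_K\oplus Kw)=0$. When $L$ is graded, the homogeneous components of an element $u\in U$ lie in $L$, and they also lie in $P_K\oplus Kw$, since that space is spanned by homogeneous basis elements of $\mathcal B$ together with $w$. So these components lie in $U$, which shows $U$ is graded. The argument for $W$ in type (iii) is the same.
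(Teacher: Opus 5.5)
Your proposal is correct and follows essentially the same route as the paper: the homogeneous path basis gives $\deg F_{ij}=i-j$ and hence ${\T}_n$, $F_n$ and $\mathfrak s_n$. The gradedness of each family in Theorem~\ref{thm:classification} is then decided through $\mathfrak s\cap(P_K\oplus Kw)=0$, so that $U=(\mathfrak s+U)\cap(P_K\oplus Kw)$ absorbs homogeneous components, and through the surjective graded homomorphism $\rho$ (the only step you pass over quickly is degree $0$, where the basis yields $Kv\oplus F_0$, which equals $K1\oplus F_0$ because $w=F_{11}\in F_0$).
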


\begin{proof}
By Lemma~\ref{lem:path-basis},
$$
\mathcal B=\{v\}\cup\{e^r:r\geq1\}\cup\{(e^*)^r:r\geq1\}\cup\{F_{ij}:i,j\geq1\}
$$
is a $K$-basis of $\T$. Each element of $\mathcal B$ is homogeneous. More precisely,
$$
\deg v=0,\qquad \deg e^r=r,\qquad\text{and}\qquad \deg (e^*)^r=-r.
$$
Recall that $\alpha_1=w$, $\alpha_i=e^{i-2}f$ for $i\geq2$, and $F_{ij}=\alpha_i\alpha_j^*$. Since $\deg w=0$ and
$$
\deg(e^{i-2}f)=i-1\qquad\text{for }i\geq2,
$$
one has $\deg\alpha_i=i-1$ for all $i\geq1$. It follows that
$$
\deg F_{ij}=\deg\alpha_i-\deg\alpha_j=(i-1)-(j-1)=i-j.
$$

Fix $n>0$. The elements of $\mathcal B$ of degree $n$ are $e^n$ and the matrix units $F_{ij}$ satisfying $i-j=n$. Hence ${\T}_n=Ke^n\oplus F_n$. The basis elements of degree $0$ are $v$ and the diagonal matrix units $F_{ii}$. Since $w=F_{11}\in F_0$ and $v=1-w$, one has $Kv+F_0=K1+F_0$. This sum is direct because $\rho(1)=1$ and $\rho(F_0)=0$ by Lemma~\ref{lem:quotient}. Thus ${\T}_0=K1\oplus F_0$. Finally, if $n<0$, the elements of $\mathcal B$ of degree $n$ are $(e^*)^{-n}$ and the matrix units $F_{ij}$ satisfying $i-j=n$. Therefore ${\T}_n=K(e^*)^{-n}\oplus F_n$.

Since the $F_{ij}$ form a $K$-basis of $F$ by Lemma~\ref{lem:matrix-units}, and each $F_{ij}$ belongs to exactly one $F_n$, we obtain $F=\bigoplus_{n\in\mathbb Z}F_n$. Put $\mathfrak s_n=\mathfrak s\cap{\T}_n$. By Lemma~\ref{lem:s-trace-zero},
$$
\mathfrak s=\operatorname{span}_K\{F_{ij}:i\neq j\}
+\operatorname{span}_K\{F_{ii}-F_{jj}:i,j\geq1\}.
$$
If $n\neq0$ and $i-j=n$, then $i\neq j$, so $F_{ij}\in\mathfrak s$. Hence $F_n\subseteq\mathfrak s_n$. Conversely, $\mathfrak s\subseteq F$ gives $\mathfrak s_n\subseteq F_n$, and therefore $\mathfrak s_n=F_n$ for $n\neq0$.
Since $F_0=\bigoplus_{i\geq1}KF_{ii}$, it follows from  Lemma~\ref{lem:s-trace-zero} that
$$
\mathfrak s_0
=\ker\bigl(\operatorname{Tr}|_{F_0}\bigr)
=\Big\{\sum_i a_iF_{ii}:\text{only finitely many }a_i\neq0
\text{ and }\sum_i a_i=0\Big\}.
$$
Thus $\mathfrak s=\mathfrak s_0\oplus\bigoplus_{n\neq0}F_n$, so $\mathfrak s$ is graded.

We now determine which Lie ideals in Theorem~\ref{thm:classification} are graded. The spaces $0$ and $K1$ are graded. The space $P_K\oplus Kw$ is also graded. If $\operatorname{char}K=0$, then
$$
P_K\oplus Kw=K1\oplus Kw\subseteq{\T}_0.
$$
If $\operatorname{char}K=\ell>0$, then
$$
P_K\oplus Kw
=(K1\oplus Kw)
\oplus\bigoplus_{m\geq1}Ke^{m\ell}
\oplus\bigoplus_{m\geq1}K(e^*)^{m\ell},
$$
where the displayed summands have degrees $0$, $m\ell$, and $-m\ell$, respectively.

Let $U\leq P_K\oplus Kw$ with $w\notin U$. If $U$ is graded, then $\mathfrak s+U$ is graded because both $\mathfrak s$ and $U$ are graded. Conversely, suppose that $\mathfrak s+U$ is graded. The quotient map $\pi:\T\to\T/\mathfrak s$ is graded because $\mathfrak s$ is graded. Moreover, as established immediately before Theorem~\ref{thm:classification}, the restriction
$$
\pi|_{P_K\oplus Kw}:P_K\oplus Kw\longrightarrow Z(\T/\mathfrak s)
$$
is injective. Hence
$$
\mathfrak s\cap(P_K\oplus Kw)=0
\qquad\text{and}\qquad
(\mathfrak s+U)\cap(P_K\oplus Kw)=U.
$$
Let $u\in U$, and write its homogeneous decomposition in the graded space $P_K\oplus Kw$ as
$$
u=\sum_{n\in\mathbb Z}u_n,\qquad\text{where } u_n\in(P_K\oplus Kw)\cap{\T}_n.
$$
Since $u\in\mathfrak s+U$ and $\mathfrak s+U$ is graded, every $u_n$ belongs to $\mathfrak s+U$. The preceding intersection identity then gives $u_n\in U$ for every $n$. Thus $U$ is graded. We have proved that
$$
\mathfrak s+U\text{ is graded}
\quad\text{if and only if}\quad
U\text{ is graded}.
$$

Next let $W\leq K[t,t^{-1}]$. By Lemma~\ref{lem:quotient} and the formulas for the homogeneous components of $\T$ proved above,
$$
\rho({\T}_n)\subseteq Kt^n\qquad\text{for all }n\in\mathbb Z.
$$
Thus $\rho$ is a graded homomorphism, where $K[t,t^{-1}]$ has its standard grading $K[t,t^{-1}]=\bigoplus_{n\in\mathbb Z}Kt^n$. If $W$ is graded, then $\rho^{-1}(W)$ is graded. Indeed, let $x\in\rho^{-1}(W)$ and write
$$
x=\sum_n x_n,\qquad\text{where } x_n\in{\T}_n.
$$
The elements $\rho(x_n)\in Kt^n$ are the homogeneous components of $\rho(x)\in W$. Since $W$ is graded, $\rho(x_n)\in W$ for every $n$, and hence $x_n\in\rho^{-1}(W)$.

Conversely, suppose that $\rho^{-1}(W)$ is graded. Let $f\in W$. Since $\rho$ is surjective, choose $x\in\rho^{-1}(W)$ with $\rho(x)=f$, and write $x=\sum_nx_n$ with $x_n\in{\T}_n$. The gradedness of $\rho^{-1}(W)$ gives $x_n\in\rho^{-1}(W)$ for every $n$. Hence $\rho(x_n)\in W$, and these are precisely the homogeneous components of $f$. Therefore $W$ is graded. We have proved that
$$
\rho^{-1}(W)\text{ is graded}
\quad\text{if and only if}\quad
W\text{ is graded}.
$$

Theorem~\ref{thm:classification} now gives the asserted list of graded Lie ideals. Finally, since each homogeneous component $Kt^n$ of $K[t,t^{-1}]$ is one-dimensional, a subspace $W\leq K[t,t^{-1}]$ is graded if and only if
$$
W=\bigoplus_{n\in S}Kt^n
=\operatorname{span}_K\{t^n:n\in S\}
$$
for some subset $S\subseteq\mathbb Z$. Similarly, if $\operatorname{char}K=\ell>0$, a graded subspace of $P_K\oplus Kw$ has the form
$$
U=U_0
\oplus\bigoplus_{m\in S_+}Ke^{m\ell}
\oplus\bigoplus_{m\in S_-}K(e^*)^{m\ell},
$$
where $U_0\leq K1\oplus Kw$, $S_+,S_-\subseteq\mathbb N$, and the condition $w\notin U$ is equivalent to $w\notin U_0$.
\end{proof}

\begin{remark}
It is known that graded associative ideals of $\T$ are only $0$, $F$, and $\T$. Thus the graded Lie ideal lattice is already much larger than the graded associative ideal lattice.
\end{remark}

\section{Derivations}

The associative derivations of the Jacobson algebra were described by Gerritzen, and Lopatkin later described derivations and outer derivations of Leavitt path algebras, including the outer derivation algebra of the Toeplitz algebra; see \cite{Gerritzen2000,Lopatkin2019}. We recall only the form needed below, and then pass from associative derivations of $\T$ to Lie derivations of $\T^-$. Thus the associative part is not new; what is recorded here is the passage from associative derivations to Lie derivations of $\T^-$.

A $K$-linear map $D:\T\to\T$ is an associative derivation if $D(ab)=D(a)b+aD(b)$ for all $a,b\in\T$. A $K$-linear map $\Delta:\T\to\T$ is a Lie derivation of $\T^-$ if
$$
\Delta([a,b])=[\Delta(a),b]+[a,\Delta(b)]
\qquad\text{where }a,b\in\T.
$$
Every associative derivation is a Lie derivation. Since $Z(\T)=K1$ and $[\T,\T]=F$, the central Lie derivations are precisely the maps
$$
C_\lambda(a)=\lambda(\rho(a))1,
\qquad \text{where }\lambda\in\operatorname{Hom}_K(K[t,t^{-1}],K).
$$

For this section set $X=e^*+f^*$ and $Y=e+f$. Then $XY=1$ and $YX=v$. For
$$
r=a_0+\sum_{n\geq1}a_nt^n+\sum_{n\geq1}a_{-n}t^{-n}\in K[t,t^{-1}],
$$
put
$$
\widetilde{r}=a_0 1+\sum_{n\geq1}a_ne^n+\sum_{n\geq1}a_{-n}(e^*)^n.
$$
Define $\partial_r$ on the generators $X,Y$ by
$$
\partial_r(Y)=\widetilde{r}\quad\text{and}\quad \partial_r(X)=-X\widetilde{r}X.
$$
The defining relation is preserved, since
$$
\partial_r(X)Y+X\partial_r(Y)=(-X\widetilde{r}X)Y+X\widetilde{r}=0.
$$
Thus $\partial_r$ is a well-defined associative derivation of $\T$. Modulo $F$, it induces the derivation of $K[t,t^{-1}]$ sending $t$ to $r$.

\begin{proposition}\label{prop:assoc-derivations-toeplitz}
Every associative $K$-derivation of $\T$ has the form $D=\operatorname{ad}a+\partial_r$, with $a\in\T$ and $r\in K[t,t^{-1}]$. The element $r$ is uniquely determined by $D$, and $a$ is unique modulo $K1$.
\end{proposition}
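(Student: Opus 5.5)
Let $D$ be an associative derivation of $\T$. First we pass to the quotient. Since $F=I(w)$ and $w$ is idempotent, $D(w)=D(w)w+wD(w)\in F$, so $D(F)\subseteq F$ (every element of $F$ is a sum of terms $xwy$). Hence $D$ induces a derivation $\bar D$ of $\T/F\cong K[t,t^{-1}]$ via Lemma~\ref{lem:quotient}. A derivation of $K[t,t^{-1}]$ is determined by $\bar D(t)$, so set $r=\bar D(t)=\rho(D(e))$. Then $D'=D-\partial_r$ is a derivation with $\rho\circ D'=0$. Indeed, $\rho\partial_r(Y)=\rho(\widetilde r)=r$ and $\rho(Y)=t$, and two derivations of $K[t,t^{-1}]$ agreeing on $t$ agree everywhere. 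So $D'(\T)\subseteq F$. The same computation settles uniqueness of $r$. If $\operatorname{ad}a+\partial_r=\operatorname{ad}a'+\partial_{r'}$, reducing modulo $F$ kills the inner parts (the quotient is commutative) and gives $r=r'$. Then $\operatorname{ad}(a-a')=0$, so $a-a'\in Z(\T)=K1$ by Lemma~\ref{lem:center}.

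The main step is to show that a derivation $D'$ with image in $F$ is inner. The natural route is to use $F\cong\M$ and the classical fact that derivations of $\M$ into itself are given by commutators with column-finite-and-row-finite matrices, i.e.\ $\operatorname{ad}$ of some $T\in\mathrm M_{rcf}(K)$. I would do this concretely. Define $T$ by $T\alpha_j=D'(\alpha_j w)$ on the ``column'' side, and set
$$
b=\sum_{i}D'(F_{i1})F_{1i}.
$$
This sum is infinite a priori, so one must show it defines an element of $\T$, or at least that it is finite modulo something.

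Here I would exploit the Toeplitz structure. The elements $X,Y$ generate $\T$, so $D'$ is determined by $D'(X),D'(Y)\in F$, subject to $D'(X)Y+XD'(Y)=0$. I would try to solve $D'=\operatorname{ad}a$ with $a\in F$ directly. Since $F_{ij}=Y^{i-1}wX^{j-1}$ for $i,j\ge1$ (with $w=1-YX$), the equation $[a,Y]=D'(Y)$ is a recursion for the matrix of $a$. In matrix terms $Y$ acts as the unilateral shift $E_{i+1,i}$ and $X$ as its adjoint. So $aS-Sa=B$ with $B$ finitary, and this determines $a$ entrywise along diagonals, starting from the first row/column, where $S$ has no preimage.

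The obstacle is that the recursion generally produces a Toeplitz-like (non-finitary) solution. For instance, $B=E_{11}$ forces $a$ to be a sum of $e^n$-type terms, not an element of $F$. Therefore I would allow $a\in\T$ rather than $a\in F$. The solution of $[a,Y]=B$, $B$ finitary, should be finitary plus a polynomial in $e,e^*$ whose coefficients are read off from $B$. This is consistent, since $[e^k,Y]$ and $[(e^*)^k,Y]$ lie in $F$. After that, the constraint $D'(X)=-XD'(Y)X$ shows that $D'(X)=[a,X]$ automatically, because $\operatorname{ad}a$ is also a derivation satisfying the same relation. So the key verification is the solvability of the single equation $[a,Y]=B$ with $a\in\T$ for every $B\in F$. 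I would establish it on basis elements $B=F_{ij}$ by explicit formulas, using the commutators $[(e^*)^a,e^b]$ computed in Lemma~\ref{lem:bracket-quotient}, and then extend linearly.
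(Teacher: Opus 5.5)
Your reduction to a derivation $D'=D-\partial_r$ with image in $F$, and your uniqueness argument, agree with the paper's proof. There is, however, a genuine gap in the final step.

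\textbf{The gap.} You claim that every derivation satisfies $D'(X)=-XD'(Y)X$, so that once $[a,Y]=D'(Y)$ one gets $D'(X)=[a,X]$ automatically. This is false. Applying $D'$ to $XY=1$ gives only $D'(X)Y=-XD'(Y)$. Since $YX=v\neq 1$, you cannot cancel $Y$. Multiplying on the right by $X$ instead yields
$$
D'(X)=-XD'(Y)X+D'(X)w .
$$
The extra term lies in $\T w=\Span\{F_{i1}:i\geq1\}$, which is exactly the set of $z$ with $zY=0$. A concrete counterexample is $\operatorname{ad}Y$: it kills $Y$, but $[Y,X]=YX-XY=-w\neq0$. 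More generally, $[Y^i,X]=-Y^{i-1}w=-F_{i1}$.

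So after you subtract $\operatorname{ad}a$ with $[a,Y]=D'(Y)$, you are left with a derivation $\delta$ such that $\delta(Y)=0$ and $\delta(X)=\sum_i\lambda_iF_{i1}$ (a finite sum). This $\delta$ can be nonzero, and your argument does not handle it.

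\textbf{The missing step.} Use the freedom in the choice of $a$: any element commuting with $Y$ may be added to it. This is the paper's second correction. Subtract $\operatorname{ad}b_2$ with $b_2=-\sum_i\lambda_iY^i$. Since $[b_2,Y]=0$ and $[Y^i,X]=-F_{i1}$, this removes $\delta$ entirely.

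\textbf{The first step.} Here you only announce "explicit formulas", but the needed identity is short:
$$
[Y^{i-1}X^j,Y]=Y^{i-1}(1-YX)X^{j-1}=Y^{i-1}wX^{j-1}=F_{ij}.
$$
Hence $b_1=\sum a_{ij}Y^{i-1}X^j$ solves $[b_1,Y]=D'(Y)$. This is cleaner than working through the commutators $[(e^*)^a,e^b]$.

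Your heuristic for $B=E_{11}$ is also slightly off. A solution is $a=X=e^*+f^*$. Moreover, modulo $\mathfrak s$, no element built only from $1$, the $e^n$, and $F$ can work, because its bracket with $Y$ has trace zero. So the solution necessarily involves $e^*$, not $e^n$-type terms.
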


\begin{proof}
This is Gerritzen's description of the associative derivations of the Jacobson algebra given in \cite[Proposition~7.4 and Corollary~7.5]{Gerritzen2000}, written in the present notation. We recall the argument. Since $w^2=w$, one has $D(w)=D(w)w+wD(w)\in F$. As $F=\T w\T$, it follows that $D(F)\subseteq F$. Hence $D$ induces a derivation of $\T/F\cong K[t,t^{-1}]$.

Let $r=\rho(D(Y))$, and replace $D$ by $D-\partial_r$. Then $D(Y),D(X)\in F$. Write $D(Y)=\sum a_{ij}F_{ij}$. Since $[Y^{i-1}X^j,Y]=F_{ij}$, the element $b_1=\sum a_{ij}Y^{i-1}X^j$ satisfies $[b_1,Y]=D(Y)$. Replacing $D$ by $D-\operatorname{ad}b_1$, we may assume $D(Y)=0$.

Applying $D$ to $XY=1$ gives $D(X)Y=0$. Write $D(X)=\sum b_{ij}F_{ij}$. Since $F_{ij}Y=F_{i,j-1}$ if $j>1$, and $F_{i1}Y=0$, we get $D(X)=\sum_i\lambda_iF_{i1}$. Since $[Y^i,X]=-F_{i1}$, the element $b_2=-\sum_i\lambda_iY^i$ satisfies $[b_2,Y]=0$ and $[b_2,X]=D(X)$. Thus, after subtracting $\partial_r$, the derivation is inner.

Finally, suppose $\operatorname{ad}a+\partial_r=0$. Passing to $\T/F\cong K[t,t^{-1}]$, the inner part vanishes, while $\partial_r$ sends $t$ to $r$. Hence $r=0$. Then $\operatorname{ad}a=0$, so $a\in Z(\T)=K1$. This proves the uniqueness assertions.
\end{proof}

\begin{lemma}\label{lem:centralizer-F-rcf}
The centralizer of $F$ in $M_{rcf}(K)$ is $K1$.
\end{lemma}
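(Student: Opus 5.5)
The plan is to argue directly with matrix entries. Here $F$ is identified with $\mathrm M_\infty(K)\subseteq\mathrm M_{rcf}(K)$ via $F_{ij}\mapsto E_{ij}$, and $1$ is the identity matrix. Let $T=(t_{ij})\in\mathrm M_{rcf}(K)$ commute with every finitary matrix. It is enough to test $T$ against the matrix units $E_{kl}$, because they span $\mathrm M_\infty(K)$. Since $T$ is row- and column-finite, both products $TE_{kl}$ and $E_{kl}T$ are finitary matrices, so the identity $TE_{kl}=E_{kl}T$ makes sense entrywise.

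The computation goes as follows. The matrix $TE_{kl}$ has the $k$-th column of $T$ placed in column $l$ and zeros elsewhere, so its $(i,j)$ entry is $t_{ik}\delta_{jl}$. Likewise the $(i,j)$ entry of $E_{kl}T$ is $\delta_{ik}t_{lj}$. Comparing entries gives
\[
t_{ik}\delta_{jl}=\delta_{ik}t_{lj}\qquad\text{for all } i,j,k,l\geq1 .
\]
First take $j=l$ and $i\neq k$. This yields $t_{ik}=0$, so $T$ is diagonal. Next take $i=k$ and $j=l$. This yields $t_{kk}=t_{ll}$ for all $k,l$, so all diagonal entries equal a common scalar $\lambda$. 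Hence $T=\lambda 1$.

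For the converse, $K1$ is central in $\mathrm M_{rcf}(K)$, so it certainly centralizes $F$.

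There is no real obstacle here. The only care needed is the bookkeeping of the identification: $F$ sits inside $\mathrm M_{rcf}(K)$ as the finitary ideal, and the entrywise product formulas use only finiteness of columns and rows. One could restrict to $E_{kk}$ and $E_{kl}$ separately, but the single identity above already handles both the off-diagonal and the diagonal constraints.
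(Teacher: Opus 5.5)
Your proof is correct and is essentially the paper's argument. The paper also compares entries of $B$ against the matrix units: it first uses the diagonal units $F_{ii}$ to force $B$ to be diagonal, then the off-diagonal units $F_{ij}$ to get $b_i=b_j$. You extract both constraints from the single identity $t_{ik}\delta_{jl}=\delta_{ik}t_{lj}$, and you add the trivial reverse inclusion that the paper leaves implicit.
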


\begin{proof}
Let $B=(b_{pq})\in M_{rcf}(K)$ commute with every $F_{ij}$. From $[B,F_{ii}]=0$ we get $b_{pi}=0$ for $p\neq i$ and $b_{iq}=0$ for $q\neq i$. Since this holds for every $i$, the matrix $B$ is diagonal. Write $B=\operatorname{diag}(b_1,b_2,\ldots)$. Then
$$
[B,F_{ij}]=(b_i-b_j)F_{ij}.
$$
Thus $b_i=b_j$ for all $i,j$. Hence $B$ is a scalar multiple of the infinite identity matrix.
\end{proof}

\begin{lemma}\label{lem:finitary-lie-derivations}
Let $\delta:F^-\to F^-$ be a Lie derivation. Then there is $B\in M_{rcf}(K)$ such that $\delta(f)=[B,f]$ for all $f\in F$. Moreover, $B$ is unique modulo scalar infinite matrices.
\end{lemma}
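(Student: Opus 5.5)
Our plan is to reduce $\delta$ to a derivation that vanishes on the diagonal, then read off the matrix $B$ column by column from the images of the matrix units $F_{i1}$ and $F_{1j}$.

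\emph{Normalising on the diagonal.} Put $h_i=\delta(F_{ii})$. Apply $\delta$ to $[F_{ii},F_{jj}]=0$. Combining this with the idempotent identities obtained from $[F_{ii},F_{ij}]=F_{ij}$ and the Lie derivation rule, we get strong restrictions on the off-diagonal part of each $h_i$. The natural candidate is
$$
B_0=\sum_i F_{ii}h_i-\sum_i h_iF_{ii},
$$
taken entrywise. Its $(p,q)$ entry is the $(p,q)$ entry of $h_q$ or of $-h_p$, according to which row or column is involved. We must first check that $B_0$ is well defined and lies in $M_{rcf}(K)$, which amounts to each row and each column being finite. Row $p$ of $B_0$ involves only the finitely supported elements $h_p$ and $h_q$, but there are infinitely many $q$. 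So we need to show that $(h_q)_{pq}\neq0$ for only finitely many $q$. We expect this to follow from $[h_i,F_{jj}]+[F_{ii},h_j]=0$, which forces $(h_j)_{ij}=(h_i)_{ij}$ for $i\neq j$ and controls the off-diagonal entries. Replacing $\delta$ by $\delta-\operatorname{ad}B_0$ (which still preserves $F$, since $M_\infty$ is an ideal in $M_{rcf}$), we aim to arrange that $\delta(F_{ii})$ has zero off-diagonal part for all $i$.

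\emph{Determining $\delta$ on off-diagonal units.} Once the diagonal is normalised, we apply $\delta$ to $F_{ij}=[F_{ii},F_{ij}]$ and $F_{ij}=[F_{ij},F_{jj}]$ for $i\neq j$. This shows that $\delta(F_{ij})\in KF_{ij}$ plus a possible trace correction, so we write $\delta(F_{ij})=\mu_{ij}F_{ij}$. Applying $\delta$ to $[F_{ij},F_{jk}]=F_{ik}$ gives the cocycle relation $\mu_{ik}=\mu_{ij}+\mu_{jk}$. Hence $\mu_{ij}=d_i-d_j$ with $d_i=\mu_{i1}$ (and $d_1=0$), and this part of $\delta$ is $\operatorname{ad}D$ for the diagonal matrix $D=\operatorname{diag}(d_i)$, which lies in $M_{rcf}(K)$. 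After subtracting $\operatorname{ad}D$, $\delta$ vanishes on all off-diagonal units, and therefore on $\mathfrak s$ by Lemma~\ref{lem:s-trace-zero}.

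\emph{The remaining linear functional.} At this stage $\delta$ is determined by its value on $F_{11}$. Since $F_{11}$ commutes with $F_{22}-F_{33}$, we get $[\delta(F_{11}),\mathfrak s]=0$ and likewise $[\delta(F_{11}),F]=0$, because $\delta$ kills $\mathfrak s$ and $F=\mathfrak s\oplus KF_{11}$. By Lemma~\ref{lem:centralizer-F-rcf}, a finitary element commuting with all of $F$ is $0$. Hence $\delta=0$, and $\delta$ has the required form $\operatorname{ad}B$ with $B=B_0+D$.

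\emph{Uniqueness.} If $[B,f]=[B',f]$ for all $f\in F$, then $B-B'$ centralises $F$. Lemma~\ref{lem:centralizer-F-rcf} then gives $B-B'\in K1$.

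\emph{Main obstacle.} We expect the hardest step to be the first one: building $B_0$ and proving that its rows and columns are finite. Lie derivations do not respect idempotents, so we must extract $(h_i)_{pq}$ carefully from the bracket relations; characteristic $2$ may need separate handling there.
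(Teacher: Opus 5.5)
Your overall plan matches the paper's: normalise the off-diagonal part of each $\delta(F_{kk})$ by $\operatorname{ad}$ of a matrix in $M_{rcf}(K)$, make the off-diagonal units eigenvectors, solve the additive cocycle with a diagonal matrix, and get uniqueness from Lemma~\ref{lem:centralizer-F-rcf}. However, the first step, which you yourself call the crux, is not carried out, and what is written fails.

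\textbf{The formula for $B_0$ is wrong.} The relation coming from $\delta([F_{ii},F_{jj}])=0$ has the opposite sign to the one you state. The $(i,j)$-entry of $[h_i,F_{jj}]+[F_{ii},h_j]$ is $(h_i)_{ij}+(h_j)_{ij}$, so $(h_j)_{ij}=-(h_i)_{ij}$. In your displayed $B_0$, both the row term $F_{pp}h_p$ and the column term $h_qF_{qq}$ contribute, so its $(p,q)$-entry is $(h_p)_{pq}-(h_q)_{pq}$. By the correct relation this equals $-2(h_q)_{pq}$. Thus $B_0$ is $-2$ times the needed correction, and it is $0$ in characteristic $2$. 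Concretely, take $\delta=\operatorname{ad}F_{12}$. Then $h_1=-F_{12}$, $h_2=F_{12}$ and $h_k=0$ for $k\geq3$. Hence $B_0=-2F_{12}$, and $\delta-\operatorname{ad}B_0=\operatorname{ad}(3F_{12})$ still sends $F_{11}$ to $-3F_{12}$. The right matrix is the one your verbal description gestures at: $a_{pp}=0$ and $a_{pq}=(h_q)_{pq}=-(h_p)_{pq}$ for $p\neq q$. This is the paper's $A$. Row $p$ is finite because $h_p$ is finitary, column $q$ is finite because $h_q$ is, and no assumption on the characteristic is needed.

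\textbf{The key support fact is missing.} More importantly, you never state the fact that makes any such normalisation possible. For every matrix $A$, $[A,F_{kk}]$ is supported in row $k$ and column $k$. So $\delta(F_{kk})-[A,F_{kk}]$ can be diagonal only if the off-diagonal part of $h_k$ already lies in row $k$ and column $k$. The argument you need is this: for $a\neq b$ with $a,b\neq k$, the $(a,b)$-entry of $0=[h_a,F_{kk}]+[F_{aa},h_k]$ is $(h_k)_{ab}$, so $(h_k)_{ab}=0$. With this, $[A,F_{kk}]$ is exactly the off-diagonal part of $h_k$.

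After this repair your remaining steps work, with two small fixes.
\begin{enumerate}
\item In characteristic $2$, the relations $[F_{ii},F_{ij}]=F_{ij}=[F_{ij},F_{jj}]$ only give $\delta(F_{ij})\in KF_{ij}+KF_{ji}$; there is no ``trace correction''. To remove the $F_{ki}$-component of $\delta(F_{ik})$, take a third index $j$ and compare $F_{ki}$-coefficients in $\delta([F_{ij},F_{jk}])=\delta(F_{ik})$.
\item In your last step, $[\delta(F_{11}),F_{11}]=0$ does not follow from $F=\mathfrak s\oplus KF_{11}$. Use instead that a matrix commuting with all off-diagonal units is scalar, hence $0$ if it is finitary. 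The paper avoids this step: it shows directly that the diagonal matrix $\delta_1(F_{ii})$ vanishes, since the $(i,j)$-entry of $\delta_1([F_{ii},F_{ij}])=\delta_1(F_{ij})$ forces all its diagonal entries to be equal, hence zero by finite support.
\end{enumerate}
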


\begin{proof}
Put $p_i=F_{ii}$. Write $\delta(p_k)=\sum h^{(k)}_{ab}F_{ab}$, a finite sum. If $a\neq b$ and $a,b\neq k$, then applying $\delta$ to $[p_a,p_k]=0$ and comparing the $(a,b)$-entry gives $h^{(k)}_{ab}=0$. Thus the off-diagonal part of $\delta(p_k)$ lies in the $k$-th row and the $k$-th column. For $i\neq j$, applying $\delta$ to $[p_i,p_j]=0$ and comparing the $(i,j)$-entry yields $h^{(i)}_{ij}+h^{(j)}_{ij}=0$.

Define $A=(a_{ij})$ by $a_{ii}=0$ and $a_{ij}=-h^{(i)}_{ij}$ for $i\neq j$. Then $A\in M_{rcf}(K)$: row-finiteness follows from the finite sum $\delta(p_i)$, and column-finiteness follows from $a_{ij}=h^{(j)}_{ij}$. Set $\delta_1=\delta-\operatorname{ad}A$. By construction, each $\delta_1(p_i)$ is a finite diagonal matrix.

We first show that $\delta_1(p_i)=0$. Write $\delta_1(p_i)=\sum_r d_rF_{rr}$. For $j\neq i$, applying $\delta_1$ to $[p_i,F_{ij}]=F_{ij}$ and comparing the $(i,j)$-entry gives $d_i=d_j$. Since $\delta_1(p_i)$ is finite, choose $j\neq i$ with $d_j=0$. Hence all $d_r$ are zero.

Now fix $i\neq j$, and put $Z_{ij}=\delta_1(F_{ij})$. From
$$
[p_k,F_{ij}]=0\quad(k\notin\{i,j\}),\qquad
[p_i,F_{ij}]=F_{ij},\qquad
[p_j,F_{ij}]=-F_{ij},
$$
and from $\delta_1(p_k)=0$, it follows that $Z_{ij}$ has the form
$$
Z_{ij}=x_{ij}F_{ij}+y_{ij}F_{ji}.
$$
Choose $k$ distinct from $i,j$. Applying $\delta_1$ to $[F_{ij},F_{jk}]=F_{ik}$ gives
$$
x_{ik}F_{ik}+y_{ik}F_{ki}=(x_{ij}+x_{jk})F_{ik}.
$$
Thus $y_{ik}=0$. Since every ordered pair of distinct indices can be written as $(i,k)$ with a third index $j$, all $y_{ij}$ vanish. Hence $\delta_1(F_{ij})=x_{ij}F_{ij}$ for $i\neq j$.

Applying $\delta_1$ to $[F_{ij},F_{ji}]=p_i-p_j$ we get $x_{ij}+x_{ji}=0$. Applying it to $[F_{ij},F_{jk}]=F_{ik}$ gives $x_{ik}=x_{ij}+x_{jk}$ for distinct $i,j,k$. Put $h_1=0$ and $h_i=x_{i1}$ for $i>1$, and let $H=\operatorname{diag}(h_1,h_2,\ldots)$. Then $x_{ij}=h_i-h_j$, so $[H,F_{ij}]=x_{ij}F_{ij}$ and $[H,p_i]=0$. Therefore $\delta-\operatorname{ad}(A+H)$ kills every matrix unit, and hence is zero on $F$. Taking $B=A+H$ proves existence.

If both $B$ and $B'$ implement $\delta$, then $B-B'$ centralizes $F$. By Lemma~\ref{lem:centralizer-F-rcf}, $B-B'\in K1$. This proves uniqueness modulo scalar infinite matrices.
\end{proof}

We write $\operatorname{Der}_{\mathrm{Lie}}(\T^-)$ for the vector space of Lie derivations of $\T^-$, and $\operatorname{Inn}(\T)=\{\operatorname{ad}a:a\in\T\}$.

\begin{theorem}\label{thm:lie-derivations-toeplitz}
Every Lie derivation of $\T^-$ has the form
$$
\Delta=\operatorname{ad}a+\partial_r+C_\lambda,
\qquad \text{where }a\in\T, r\in K[t,t^{-1}],
\lambda\in\operatorname{Hom}_K(K[t,t^{-1}],K).
$$
Moreover, we have the vector-space decomposition
$$
\operatorname{Der}_{\mathrm{Lie}}(\T^-)
=
\operatorname{Inn}(\T)
\oplus\{\partial_r:r\in K[t,t^{-1}]\}
\oplus\{C_\lambda:\lambda\in\operatorname{Hom}_K(K[t,t^{-1}],K)\}.
$$
Equivalently, as vector spaces,
$$
\operatorname{Der}_{\mathrm{Lie}}(\T^-)
\cong
\T/K1\oplus K[t,t^{-1}]\frac{d}{dt}
\oplus \operatorname{Hom}_K(K[t,t^{-1}],K).
$$
\end{theorem}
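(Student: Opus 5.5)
Let $\Delta$ be a Lie derivation of $\T^-$. The strategy is to reduce to the associative case by peeling off an inner part on $F$, then showing the remainder is an associative derivation up to a central map. First, $\Delta$ preserves $F$: since $F=[\T,\T]$ by Proposition~\ref{prop:derived}, $\Delta(F)=\Delta([\T,\T])\subseteq[\Delta(\T),\T]+[\T,\Delta(\T)]\subseteq F$. Hence $\delta=\Delta|_F$ is a Lie derivation of $F^-$. By Lemma~\ref{lem:finitary-lie-derivations} there is $B\in M_{rcf}(K)$, unique modulo scalars, with $\delta=\operatorname{ad}B$ on $F$.

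The next step is to realize $B$ by an element of $\T$ modulo the associative part. The plan is to embed $\T$ into $M_{rcf}(K)$ via its faithful action on $F$, using the left regular action on the column space $\bigoplus_i K\alpha_i$ with basis the paths ending at $w$. Each of $e,e^*,f,f^*$ acts by a row- and column-finite matrix, so $\T\hookrightarrow M_{rcf}(K)$; injectivity follows since an element killing $F$ lies in the centralizer of $F$ and must vanish. For $x\in\T$ and $f\in F$, apply $\Delta$ to $[x,f]$:
$$
[B,[x,f]]=[\Delta(x),f]+[x,[B,f]],
$$
so $[\Delta(x)-[B,x],f]=0$ for all $f\in F$. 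By Lemma~\ref{lem:centralizer-F-rcf}, $\Delta(x)-[B,x]\in K1$ inside $M_{rcf}(K)$. Thus $\operatorname{ad}B$ maps $\T$ into $\T+K1=\T$, so $D_0:=\operatorname{ad}B|_{\T}$ is an associative derivation of $\T$, being a commutator with a fixed matrix. Moreover $\Delta-D_0$ takes values in $K1$.

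By Proposition~\ref{prop:assoc-derivations-toeplitz}, $D_0=\operatorname{ad}a+\partial_r$. The map $\Gamma=\Delta-D_0$ is a Lie derivation with values in $Z(\T)=K1$, so $\Gamma([x,y])=0$, i.e.\ $\Gamma$ vanishes on $F$. Hence $\Gamma$ factors through $\rho$, giving $\Gamma=C_\lambda$. This proves existence.

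For directness, suppose $\operatorname{ad}a+\partial_r+C_\lambda=0$. Restricting to $F$, where $C_\lambda$ vanishes, shows that the associative derivation $\operatorname{ad}a+\partial_r$ is zero on $F$. It is also central-valued on $\T$, since it equals $-C_\lambda$. The induced derivation on $\T/F$ is then central-valued modulo $F$. Because $K1\cap F=0$ it is actually zero modulo $F$ only if its values lie in $F\cap K1$. More directly, $\partial_r+\operatorname{ad}a$ maps $Y$ to a scalar $\mu$, and applying it to $XY=1$ and $YX=v$ forces $\mu=0$. Then $\partial_r(Y)\equiv r$ modulo $F$ gives $r=0$, and uniqueness in Proposition~\ref{prop:assoc-derivations-toeplitz} gives $\operatorname{ad}a=0$ and then $\lambda=0$. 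The identification $\operatorname{Inn}(\T)\cong\T/K1$ uses Lemma~\ref{lem:center}, and $r\mapsto\partial_r$ is injective. I expect the main obstacle to be the embedding of $\T$ into $M_{rcf}(K)$ compatible with $F$, and verifying $\operatorname{ad}B(\T)\subseteq\T$ rigorously.
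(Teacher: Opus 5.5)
Your argument is correct, and the existence half follows the paper's proof. As there, $\Delta(F)\subseteq F$ because $F=[\T,\T]$, Lemma~\ref{lem:finitary-lie-derivations} supplies $B$, $\T$ is realized in $\mathrm M_{rcf}(K)$ through its left action on $\T w=\bigoplus_i K\alpha_i$, and Lemma~\ref{lem:centralizer-F-rcf} controls $\Delta(x)-[B,x]$.

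You streamline this by running the centralizer comparison for every $x\in\T$ rather than only for $X$ and $Y$. This gives $[B,\T]\subseteq\T$ without using that $X,Y$ generate $\T$. It also shows at once that $\Gamma=\Delta-D_0$ is $K1$-valued, so the paper's second centralizer argument for $\Gamma$ is not needed.

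Directness is where you genuinely differ. The paper passes to $\T/F\cong K[t,t^{-1}]$, gets $d_r+c_\lambda=0$, and evaluates at $t$ and at $t^N$ with $N\cdot1_K\neq0$. You instead use that $D=\operatorname{ad}a+\partial_r=-C_\lambda$ is an associative derivation with values in $K1$. Then $0=D(XY)=D(X)Y+XD(Y)$ is a linear relation between $X=e^*+F_{12}$ and $Y=e+F_{21}$, which are linearly independent by Lemma~\ref{lem:path-basis}. Hence $D(X)=D(Y)=0$ and $D=0$. This is shorter and needs no characteristic-dependent choice of exponent.

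Two spots in your write-up need tightening.

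\textbf{Directness.} State that $D(X)$ is also a scalar, and that $D(X)=D(Y)=0$ gives $D=0$ on all of $\T$ because $X,Y$ generate it. Once $D=0$, the uniqueness clause of Proposition~\ref{prop:assoc-derivations-toeplitz} gives $r=0$ and $a\in K1$ at once. Then $C_\lambda=0$, so $\lambda=0$ because $\rho$ is surjective. Delete the sentence about $K1\cap F$ before ``More directly''; it does not say anything coherent.

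\textbf{Injectivity of the embedding.} If $x$ annihilates $\T w$, you only get $xF=0$ a priori. To put $x$ in the centralizer of $F$ you also need $Fx=0$. This holds because each $fx\in F$ satisfies $(fx)F=f(xF)=0$, and $\mathrm M_\infty(K)$ has zero left annihilator. Then the proof of Lemma~\ref{lem:center}, which uses only commutation with elements of $F$, gives $x\in K1$, and $xw=0$ forces $x=0$. The paper avoids this step by computing $x\alpha_j$ directly for large $j$.
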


\begin{proof}
For every $x,y\in\T$, one has
$$
\Delta([x,y])=[\Delta(x),y]+[x,\Delta(y)]\in[\T,\T].
$$
Since $F=[\T,\T]$, the linearity therefore yields $\Delta(F)\subseteq F$, and $\Delta|_F$ is a Lie derivation of $F^-$.

Before applying Lemma~\ref{lem:finitary-lie-derivations}, we realize $\T$ and $F$ inside a common matrix algebra. Put $\mathscr V=\T w$. Then $\mathscr V$ has basis $\{\alpha_i:i\geq1\}$. Left multiplication defines a representation
$$
\Phi:\T\longrightarrow\operatorname{End}_K(\mathscr V).
$$
The matrix-unit relations and the definitions of $X$ and $Y$ implies that $F_{ij}\alpha_k=\delta_{jk}\alpha_i$, together with $Y\alpha_i=\alpha_{i+1}$, $X\alpha_1=0$, and $X\alpha_i=\alpha_{i-1}$ for $i>1$.
Thus, relative to the basis $\{\alpha_i:i\geq1\}$, we have
$$
\Phi(F_{ij})=E_{ij},
\qquad
\Phi(Y)=\sum_{i\geq1}E_{i+1,i},
\qquad\text{and}\qquad
\Phi(X)=\sum_{i\geq1}E_{i,i+1}.
$$
In particular, the image of $\Phi$ is contained in $\mathrm M_{rcf}(K)$.

We next verify that $\Phi$ is faithful. By Lemma~\ref{lem:path-basis}, every $x\in\T$ has a unique expression
$$
x=\lambda_0 1+\sum_{n=1}^{N}\lambda_ne^n
+\sum_{n=1}^{M}\mu_n(e^*)^n+f_0,
\text{where }f_0\in F.
$$
Suppose that $\Phi(x)=0$. Choose $j>M+1$ larger than every column index occurring in the finitary matrix $f_0$. Then $f_0\alpha_j=0$, and direct calculation shows
$$
x\alpha_j
=\lambda_0\alpha_j
+\sum_{n=1}^{N}\lambda_n\alpha_{j+n}
+\sum_{n=1}^{M}\mu_n\alpha_{j-n}.
$$
The basis vectors in this expression are distinct, so $\lambda_0=\lambda_n=\mu_n=0$ for all $n$. It follows that $x=f_0$. Since $F_{ij}$ act as the standard matrix units on $\mathscr V$, the action of $F$ on $\mathscr V$ is faithful, and hence $f_0=0$. Therefore $\Phi$ is injective. We may consequently identify $\T\subseteq\mathrm M_{rcf}(K)$ and $F=\mathrm M_\infty(K)$.

By Lemma~\ref{lem:finitary-lie-derivations}, there is $B\in\mathrm M_{rcf}(K)$ such that $\Delta(f)=[B,f]$, where $f\in F$. We claim that $[B,Y]$ and $[B,X]$ belong to $\T$. Let $f\in F$. Since $[Y,f]\in F$, the preceding identity implies
$$
\Delta([Y,f])=[B,[Y,f]].
$$
On the other hand, the Lie derivation identity yields
$$
\Delta([Y,f])=[\Delta(Y),f]+[Y,[B,f]],
$$
whereas the Jacobi identity gives
$$
[B,[Y,f]]=[[B,Y],f]+[Y,[B,f]].
$$
Comparing these expressions, we obtain
$$
[[B,Y]-\Delta(Y),f]=0,
\qquad\text{where } f\in F.
$$
Thus $[B,Y]-\Delta(Y)$ centralizes $F$ in $\mathrm M_{rcf}(K)$. By Lemma~\ref{lem:centralizer-F-rcf}, we have $[B,Y]-\Delta(Y)\in K1$. Since $\Delta(Y)\in\T$, it follows that $[B,Y]\in\T$. The same argument, with $X$ in place of $Y$, shows that $[B,X]\in\T$.

The elements $X$ and $Y$ generate $\T$, and $\operatorname{ad}B$ is an associative derivation of $\mathrm M_{rcf}(K)$. Since $[B,X]$ and $[B,Y]$ belong to $\T$, repeated use of $[B,xy]=[B,x]y+x[B,y]$ we get  that $[B,\T]\subseteq\T$. Hence
$$
D_B:\T\longrightarrow\T,
\qquad\text{defined by }
D_B(x)=[B,x],
$$
is an associative derivation of $\T$. Moreover, we have $D_B(f)=[B,f]=\Delta(f)$, where $f\in F$. Therefore, Proposition~\ref{prop:assoc-derivations-toeplitz} now applied to yield $D_B=\operatorname{ad}a+\partial_r$ for some $a\in\T$ and $r\in K[t,t^{-1}]$.

Set $\Gamma=\Delta-D_B$. Since every associative derivation is a Lie derivation, $\Gamma$ is a Lie derivation of $\T^-$. The preceding equality on $F$ shows that $\Gamma(F)=0$. If $x\in\T$ and $f\in F$, then $[x,f]\in F$, and hence
$$
0=\Gamma([x,f])
=[\Gamma(x),f]+[x,\Gamma(f)]
=[\Gamma(x),f].
$$
Thus $\Gamma(x)$ centralizes $F$ for every $x\in\T$. It follows form Lemma~\ref{lem:centralizer-F-rcf} that $\Gamma(x)\in K1$ where $\qquad x\in\T$. Moreover, it is clear that $\Gamma$ vanishes on $F=[\T,\T]$. Consequently, $\Gamma$ factors uniquely through $\rho:\T\to K[t,t^{-1}]$, and there is a unique $\lambda\in\operatorname{Hom}_K(K[t,t^{-1}],K)$ such that
$$
\Gamma(x)=\lambda(\rho(x))1=C_\lambda(x),
\qquad \text{where }x\in\T.
$$
This provides the existence of the stated decomposition.

It remains to prove directness. Suppose that $\operatorname{ad}a+\partial_r+C_\lambda=0$. Passing to $\T/F\cong K[t,t^{-1}]$ eliminates the inner summand and yields $d_r+c_\lambda=0$, where $d_r$ is the derivation determined by $d_r(t)=r$, and $c_\lambda(s)=\lambda(s)$. Evaluating this identity at $t$ we get $r+\lambda(t)=0$, so $r=\gamma\in K$, where $\gamma=-\lambda(t)$. Suppose that $\gamma\neq0$. If $\operatorname{char}K=0$, take $N=2$; if $\operatorname{char}K=p>0$, take $N=p+1$. In either case, $N>1$ and $N\cdot1_K\neq0$. Evaluating at $t^N$ we have
$$
0=d_r(t^N)+c_\lambda(t^N)
=N\gamma t^{N-1}+\lambda(t^N)1.
$$
Since $N\gamma\neq0$ and $t^{N-1}$ is linearly independent from $1$ in $K[t,t^{-1}]$, this is impossible. Hence $\gamma=0$, and therefore $r=0$. It follows that $c_\lambda=0$, so $\lambda=0$. Finally, $\operatorname{ad}a=0$, whence $a\in Z(\T)=K$. Thus the three summands have pairwise trivial intersection, and the decomposition is direct.
\end{proof}

We use the natural $\mathbb Z$-grading $\T=\bigoplus_{n\in\mathbb Z}{\T}_n$. A derivation $D$ of $\T$ is homogeneous of degree $d$ if $D({\T}_n)\subseteq{\T}_{n+d}$ for all $n$. The same definition applies to Lie derivations of $\T^-$. Degree-zero homogeneous derivations are called graded derivations. For $n\in\mathbb Z$, let $\varepsilon_n:K[t,t^{-1}]\to K$ be the coefficient functional defined by $\varepsilon_n(t^m)=\delta_{n,m}$.

\begin{corollary}\label{cor:homogeneous-derivations}
Let $d\in\mathbb Z$. The homogeneous associative derivations of $\T$ of degree $d$ are precisely
$$
\operatorname{ad}a+\lambda\partial_{t^{d+1}},
\qquad\text{where } a\in{\T}_d \text{ and }\lambda\in K.
$$
The homogeneous Lie derivations of $\T^-$ of degree $d$ are precisely
$$
\operatorname{ad}a+\lambda\partial_{t^{d+1}}+\mu C_{\varepsilon_{-d}},
\qquad\text{where }a\in{\T}_d\text{ and }\lambda,\mu\in K.
$$
\end{corollary}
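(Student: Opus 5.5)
The plan is to prove both descriptions at once by taking homogeneous components of the decompositions in Proposition~\ref{prop:assoc-derivations-toeplitz} and Theorem~\ref{thm:lie-derivations-toeplitz}. For an arbitrary $K$-linear map $\Delta:\T\to\T$ and $k\in\mathbb Z$, I define its \emph{degree-$k$ component} $\Delta_k$ on homogeneous elements $x\in{\T}_n$ by letting $\Delta_k(x)$ be the ${\T}_{n+k}$-component of $\Delta(x)$, and extend linearly. This is defined pointwise, so no finiteness is needed. The map $\Delta\mapsto\Delta_k$ is linear, and $\Delta$ is homogeneous of degree $d$ if and only if $\Delta=\Delta_d$.

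\textbf{Step 1: components of derivations are derivations.} Since multiplication maps ${\T}_n\times{\T}_m$ into ${\T}_{n+m}$, comparing degree-$(n+m+k)$ parts of $\Delta(xy)=\Delta(x)y+x\Delta(y)$ for homogeneous $x,y$ gives $\Delta_k(xy)=\Delta_k(x)y+x\Delta_k(y)$. The same argument applies to the Lie identity. Hence $\Delta_k$ is again an associative (respectively Lie) derivation.

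\textbf{Step 2: the components of each summand.}
\begin{enumerate}[label=\rm(\alph*)]
\item For $a=\sum_n a_n$ with $a_n\in{\T}_n$, one has $(\operatorname{ad}a)_k=\operatorname{ad}a_k$, because $[a_n,x]\in{\T}_{n+m}$ for $x\in{\T}_m$.
\item For $r=\sum_m r_mt^m$, note that $X=e^*+f^*$ and $Y=e+f$ are homogeneous of degrees $-1$ and $1$. By Step~1, $(\partial_r)_k$ is an associative derivation, so it is determined by its values on the generators $X,Y$. Its value on $Y$ is the degree-$(k+1)$ part of $\widetilde r$, namely $r_{k+1}\widetilde{t^{k+1}}$. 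Its value on $X$ is the degree-$(k-1)$ part of $-X\widetilde rX$, namely $-r_{k+1}X\widetilde{t^{k+1}}X$. Hence $(\partial_r)_k=r_{k+1}\partial_{t^{k+1}}$. The same computation shows that $\partial_{t^{d+1}}$ is homogeneous of degree $d$.
\item By the description of ${\T}_n$ in Corollary~\ref{cor:graded-lie-ideals}, $\rho({\T}_n)\subseteq Kt^n$. Thus $C_\lambda$ maps ${\T}_n$ into $K1\subseteq{\T}_0$, and $C_\lambda(x)=\lambda(t^n)\varepsilon_n(\rho(x))1$ for $x\in{\T}_n$. Therefore $(C_\lambda)_k$ vanishes on ${\T}_n$ unless $n=-k$, and $(C_\lambda)_k=\lambda(t^{-k})\,C_{\varepsilon_{-k}}$.
\end{enumerate}
In particular, $C_{\varepsilon_{-d}}$ and $\operatorname{ad}a$ with $a\in{\T}_d$ are homogeneous of degree $d$. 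This proves that all the listed maps are homogeneous of degree $d$.

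\textbf{Step 3: the converse.} Let $D$ be a homogeneous associative derivation of degree $d$. Write $D=\operatorname{ad}a+\partial_r$ by Proposition~\ref{prop:assoc-derivations-toeplitz} and apply $(\cdot)_d$. Using Step~2,
$$D=D_d=\operatorname{ad}a_d+r_{d+1}\partial_{t^{d+1}},$$
with $a_d\in{\T}_d$. Likewise, for a homogeneous Lie derivation $\Delta$ of degree $d$, write $\Delta=\operatorname{ad}a+\partial_r+C_\lambda$ by Theorem~\ref{thm:lie-derivations-toeplitz}. Taking degree-$d$ components gives
$$\Delta=\operatorname{ad}a_d+r_{d+1}\partial_{t^{d+1}}+\lambda(t^{-d})C_{\varepsilon_{-d}}.$$

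\textbf{Main obstacle.} The delicate step is Step~2(b): identifying $(\partial_r)_k$ with $r_{k+1}\partial_{t^{k+1}}$. It relies on Step~1, so that a component of a derivation is determined by its values on the homogeneous generators $X,Y$. It also relies on the degree bookkeeping for $X\widetilde rX$. Everything else is formal linearity of taking components.
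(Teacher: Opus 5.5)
Your proof is correct and follows essentially the paper's route: take the decompositions from Proposition~\ref{prop:assoc-derivations-toeplitz} and Theorem~\ref{thm:lie-derivations-toeplitz}, split $a$, $r$ and the functional into homogeneous pieces, and keep only the degree-$d$ piece. Your projection $\Delta\mapsto\Delta_d$ makes explicit the step the paper states as ``every summand of degree $k\neq d$ is zero.'' It also avoids both the infinite pointwise sum $\sum_k\eta(t^{-k})C_{\varepsilon_{-k}}$ and the paper's (unnecessary) appeal to the uniqueness and directness assertions.
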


\begin{proof}
We first record the degrees of the three types of derivations appearing in Theorem~\ref{thm:lie-derivations-toeplitz}. If $a\in{\T}_k$, then
$$
[{\T}_n,a]\subseteq{\T}_{n+k},
\qquad n\in\mathbb Z,
$$
so $\operatorname{ad}a$ is homogeneous of degree $k$.

For $m\in\mathbb Z$, the element $\widetilde{t^m}$ is homogeneous of degree $m$. Since $\deg(Y)=1$ and $\deg(X)=-1$, one has
$$
\partial_{t^m}(Y)=\widetilde{t^m}\in{\T}_m
\quad\text{and}\quad
\partial_{t^m}(X)=-X\widetilde{t^m}X\in{\T}_{m-2}.
$$
Thus $\partial_{t^m}$ shifts the degrees of both generators by $m-1$. Since $X$ and $Y$ generate $\T$, the derivation $\partial_{t^m}$ is homogeneous of degree $m-1$.

Finally, $C_{\varepsilon_n}$ vanishes on every homogeneous component of $\T$ except ${\T}_n$ modulo $F$, and its image is contained in $K1\subseteq{\T}_0$. Hence $C_{\varepsilon_n}$ has degree $-n$. In particular, $C_{\varepsilon_{-d}}$ has degree $d$.

Let $D$ be a homogeneous associative derivation of degree $d$. By Proposition~\ref{prop:assoc-derivations-toeplitz}, write
$$
D=\operatorname{ad}a+\partial_r,
\qquad\text{where } a\in\T, r\in K[t,t^{-1}].
$$
Decompose $a$ and $r$ into their homogeneous components as
$$
a=\sum_{k\in\mathbb Z}a_k
\quad\text{and}\quad
r=\sum_{m\in\mathbb Z}r_mt^m,
$$
where $a_k\in{\T}_k$, $r_m\in K$, and both sums are finite. Then
$$
D=\sum_{k\in\mathbb Z}
\left(
\operatorname{ad}a_k+
r_{k+1}\partial_{t^{k+1}}
\right),
$$
and the summand indexed by $k$ has degree $k$.

Since $D$ has degree $d$, every summand of degree $k\neq d$ is zero. The uniqueness assertion in Proposition~\ref{prop:assoc-derivations-toeplitz} then implies
$$
r_{k+1}=0
\quad\text{and}\quad
a_k\in Z(\T)=K1,
\qquad k\neq d.
$$
Since $K1\subseteq{\T}_0$, one has $a_k=0$ whenever $k\neq0$, while a scalar component $a_0$ has zero inner derivation and may be discarded. Consequently,
$$
D=\operatorname{ad}a_d+r_{d+1}\partial_{t^{d+1}}.
$$
Writing $\lambda=r_{d+1}$ proves the associative assertion. Conversely, both summands in the displayed expression have degree $d$, so every such derivation is homogeneous of degree $d$.

Now let $\Delta$ be a homogeneous Lie derivation of degree $d$. By Theorem~\ref{thm:lie-derivations-toeplitz}, write
$$
\Delta=\operatorname{ad}a+\partial_r+C_\eta,
\qquad
a\in\T,\quad
r\in K[t,t^{-1}],\quad
\eta\in\operatorname{Hom}_K(K[t,t^{-1}],K).
$$
Retain the homogeneous decompositions of $a$ and $r$ above. The central summand admits the pointwise decomposition
$$
C_\eta
=
\sum_{k\in\mathbb Z}
\eta(t^{-k})C_{\varepsilon_{-k}}.
$$
This sum is well defined because $\rho(x)$ has finite Laurent support for every $x\in\T$. Therefore
$$
\Delta
=
\sum_{k\in\mathbb Z}
\left(
\operatorname{ad}a_k
+r_{k+1}\partial_{t^{k+1}}
+\eta(t^{-k})C_{\varepsilon_{-k}}
\right),
$$
where the summand indexed by $k$ has degree $k$.

For $k\neq d$, homogeneity of $\Delta$ forces the corresponding summand to vanish. The directness in Theorem~\ref{thm:lie-derivations-toeplitz} then yields
$$
r_{k+1}=0,
\qquad
\eta(t^{-k})=0,
\qquad\text{and}\qquad
a_k\in K.
$$
As in the associative case, the scalar components of $a$ may be omitted. Only the component of degree $d$ remains, and hence
$$
\Delta
=
\operatorname{ad}a_d
+r_{d+1}\partial_{t^{d+1}}
+\eta(t^{-d})C_{\varepsilon_{-d}}.
$$
Taking $\lambda=r_{d+1}$ and $\mu=\eta(t^{-d})$ proves the Lie assertion. Conversely, each of the three summands in the displayed expression has degree $d$, so every such Lie derivation is homogeneous of degree $d$.
\end{proof}
\section*{Declarations}

\noindent\textbf{Funding.}
The second author (H.V. Khanh) was supported by the Vietnam National Foundation for Science and
Technology Development (NAFOSTED) under Grant No.~101.04-2025.41.

\medskip
\noindent\textbf{Competing interests.}
The author has no relevant financial or non-financial interests to disclose.

\medskip
\noindent\textbf{Data availability.}
No data were generated or analysed in this study.

\end{document}